\newcommand{\dd}{\mathrm{d}}
\newcommand{\dw}{\dot{W}}
\newcommand{\id}{\mbox{Id}}
\newcommand{\iot}{\int_{0}^{t}}
\newcommand{\ird}{\int_{\R^{d}}}
\newcommand{\ott}{[0,\tau]}
\newcommand{\R}{\mathbb R}
\newcommand{\N}{\mathbb N}
\newcommand{\Z}{\mathbb Z}
\newcommand{\be}{\mathbb{E}}
\newcommand{\bp}{\mathbf{P}}
\newcommand{\ca}{\mathcal A}
\newcommand{\cac}{\mathcal C}
\newcommand{\cd}{\mathcal D}
\newcommand{\ce}{\mathcal E}
\newcommand{\cf}{\mathcal F}
\newcommand{\cg}{\mathcal G}
\newcommand{\ch}{\mathcal H}
\newcommand{\ci}{\mathcal I}
\newcommand{\cj}{\mathcal J}
\newcommand{\cs}{\mathcal S}
\newcommand{\cu}{\mathcal U}
\newcommand{\al}{\alpha}
\newcommand{\ep}{\varepsilon}
\newcommand{\laa}{\Lambda}
\newcommand{\si}{\sigma}
\newcommand{\vp}{\varphi}
\newcommand{\lp}{\left(}
\newcommand{\rp}{\right)}
\newcommand{\lc}{\left[}
\newcommand{\rc}{\right]}
\newtheorem{theorem}{Theorem}[section]
\newtheorem{definition}[theorem]{Definition}
\newtheorem{hypothesis}[theorem]{Hypothesis}
\newtheorem{lemma}[theorem]{Lemma}
\newtheorem{notation}[theorem]{Notation}
\newtheorem{proposition}[theorem]{Proposition}
\theoremstyle{remark}
\newtheorem{remark}[theorem]{Remark}
\theoremstyle{remark}
\newtheorem{example}[theorem]{Example}
\newcommand{\bean}{\begin{eqnarray*}}
\newcommand{\eean}{\end{eqnarray*}}
\newcommand{\ben}{\begin{enumerate}}
\newcommand{\een}{\end{enumerate}}
\newcommand{\beq}{\begin{equation}}
\newcommand{\eeq}{\end{equation}}
\begin{document}

\title[Stochastic heat equation]
{On the   necessary and sufficient conditions 
  to solve a    heat equation 
 with general Additive   Gaussian noise}

\date{\today}

\author{Yaozhong Hu, Yanghui Liu,  Samy Tindel }
\address{Yaozhong Hu:  
Department  of Mathematical and 
 Statistical Sciences, University of Alberta,  Edmonton,   Canada, T6G 2G1} 

\address{Yanghui Liu,  Samy Tindel: Department of Mathematics, Purdue University, West Lafayette, IN 47907, USA}
\email{yaozhong@ualberta.ca\,,\ \ liu2048@purdue.edu\,,\ \ stindel@purdue.edu}

\subjclass[2010]{60G15; 60H07; 60H15 } 

\keywords{Stochastic heat equation, General Gaussian noise, $L^2$ solution, sufficient and necessary condition, Wong-Zakai approximation, pathwise solution, H\"older continuity, Besov space. }
\thanks{S. Tindel is supported by the NSF grant  DMS1613163}
\date{}

\begin{abstract}
In this note we consider   stochastic heat equation with general additive Gaussian noise. Our aim is to derive   some necessary and sufficient conditions on the   Gaussian noise   in order to solve the corresponding heat equation. We   investigate this problem invoking two different methods, respectively based on variance computations and on path-wise considerations in Besov spaces.  We   are going to see that, as anticipated, both approaches lead to the same necessary and sufficient condition on the noise. In addition, the path-wise approach    brings out   regularity results for the solution.  
\end{abstract}

\maketitle

\section{Introduction}

In this paper, we are concerned with the following stochastic heat equation with additive noise:
\begin{equation}\label{eqn.anderson}
\begin{cases}
&\partial_{t} u    = \frac12 \Delta u   + \dw , \qquad t\in\ott,\, x\in\R^{d} \\
&u(0, x)   = 0,
\end{cases}
\end{equation}
where     $W$  is a general centered Gaussian field  with time covariance $R$ and spatial spectral measure   $\mu$ (see Definition \ref{def.gau} for further details), and where $\dot{W} = \frac{\partial^{d+1} W }{\partial t\partial x_{1}\cdots \partial x_{d}} $. In the recent years, there has been an active line of research aiming at a complete definition of stochastic heat equations driven by rough space-time noises. Among the numerous contributions in rough environments, let us highlight the following ones:

\noindent
\emph{(i)} The first efforts in this direction concern the definition of equation \eqref{eqn.anderson} driven by a Brownian motion $W$ in time. In this context, the articles \cite{Da, PZ} give (among other results) optimal conditions on the space covariance of $W$ so that the solution to \eqref{eqn.anderson} is function-valued.

\noindent
\emph{(ii)}  A lot of efforts have been devoted recently to the study of multiplicative stochastic heat equations driven by fractional noises in both space and time. A particular emphasis has been made on the effects of the noise on scaling exponents and asymptotic behavior of the solution. Among the abundant literature on this topic, let us mention the references~\cite{C, FK, HHNT}.

\noindent
\emph{(iii)} The acclaimed theory of regularity structures was introduced (see \cite{Hai, Hai1}) in order to solve highly nonlinear systems in rough environments, which require  renormalization techniques. Interestingly enough, this method is applied in \cite{Deya} to a family of stochastic heat equations similar to \eqref{eqn.anderson}.

The current paper can be seen as another step towards a better understanding of heat equations in rough environments. Namely, our  aim is to find optimal conditions in both space and time such that equation \eqref{eqn.anderson} admits a function (versus distribution)-valued solution. Otherwise stated, we wish to give necessary and sufficient conditions so that equation \eqref{eqn.anderson} can be defined without renormalization. Specifically, we will prove the following result (see Theorem \ref{thm1} for a more precise statement): 
\begin{theorem}\label{thm0}
Let $W$ be a centered Gaussian field with time covariance $R$ and spatial spectral measure $\mu$. 
Suppose that $R$ is a continuous function such that
\begin{eqnarray}\label{hyp.R}
   | R(t,t)+R(s,s')-R(s,t)-R(t, s') |  \asymp  |t-(s\wedge s')|^{\beta}, \quad   0\leq s,s'<t
\end{eqnarray}
for some $\beta\in (0, 2]$, where we write $a\asymp b$ if $a\leq K_{1} b$ and $b\leq K_{2}a$ for two  constants $K_{1} ,K_{2}>0$. 
Then equation \eqref{eqn.anderson} admits a random field solution $\{u(t, x); \, t\geq 0 , x\in \R^{d}\}$ if and only if   the following conditions is satisfied:
\begin{eqnarray}\label{hyp.mu}
  \int_{\R^{d}} \frac{1}{1+|\xi|^{2\beta}} \mu(\dd \xi)<\infty .
\end{eqnarray}
 \end{theorem}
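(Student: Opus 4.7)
\medskip

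\textbf{Proof proposal.}  The natural approach is the classical mild solution strategy, reducing existence of a random field solution to a single variance computation in Fourier space. Writing the mild solution formally as
\begin{equation*}
u(t,x) \,=\, \int_{0}^{t}\!\int_{\R^{d}} p_{t-s}(x-y)\, W(\dd s,\dd y),
\end{equation*}
where $p_{r}(z)=(2\pi r)^{-d/2}e^{-|z|^{2}/(2r)}$ is the heat kernel, and using Parseval together with the spectral representation of $W$, one obtains
\begin{equation*}
\be\bigl[\,u(t,x)^{2}\,\bigr] \,=\, \int_{\R^{d}} I_{t}(\xi)\, \mu(\dd \xi), \qquad I_{t}(\xi):=\int_{0}^{t}\!\int_{0}^{t} e^{-\frac12 |\xi|^{2}(2t-s-s')}\, R(\dd s,\dd s').
\end{equation*}
Thus the whole argument reduces to showing that $u(t,x)$ is well defined as an $L^{2}$-random variable iff \eqref{hyp.mu} holds, and the linearity of the equation together with the Gaussianity of the noise then upgrades this to existence of a random field solution.

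\medskip

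The decisive step is the two-sided estimate
\begin{equation*}
I_{t}(\xi) \,\asymp\, \frac{1}{1+|\xi|^{2\beta}},
\end{equation*}
uniformly in $\xi\in\R^{d}$ (with $t>0$ fixed). The small-$|\xi|$ regime is immediate, since $I_{t}(\xi)$ is bounded above and below by constant multiples of $R(t,t)$. For the large-$|\xi|$ regime I would set $\phi(s)=e^{-\frac12|\xi|^{2}(t-s)}$, which is increasing with $\phi(t)=1$ and $\phi'(u)=\tfrac12 |\xi|^{2}\phi(u)$, and perform an integration by parts against the measure $R(\dd s,\dd s')$. Writing $\phi(s)=1-\int_{s}^{t}\phi'(u)\,\dd u$ and expanding the square leads to
\begin{equation*}
I_{t}(\xi) \,=\, R(t,t) + \int_{0}^{t}\!\int_{0}^{t}\phi'(u)\phi'(u')\,\bigl[\,R(t,t)+R(u,u')-R(u,t)-R(t,u')\,\bigr]\,\dd u\,\dd u' - 2\int_{0}^{t}\phi'(u)\bigl[R(t,t)-R(t,u)\bigr]\dd u,
\end{equation*}
in which the bracket on the right is controlled from above and below by $|t-(u\wedge u')|^{\beta}$ thanks to \eqref{hyp.R}. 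A change of variables $v=\tfrac12|\xi|^{2}(t-u)$ then converts each remaining integral into an incomplete Gamma-type integral, producing the prefactor $|\xi|^{-2\beta}$ for large $|\xi|$.

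\medskip

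With the estimate on $I_{t}(\xi)$ in hand, $\be[u(t,x)^{2}]\asymp \int_{\R^{d}}(1+|\xi|^{2\beta})^{-1}\mu(\dd\xi)$, so sufficiency of \eqref{hyp.mu} follows by constructing $u(t,x)$ as the $L^{2}$-limit of the Wiener integrals of smooth approximations $p_{t-s}^{\ep}$, while necessity follows from the observation that any random field solution must coincide with this Wiener integral (by the uniqueness built into the linear additive structure) and therefore satisfy $\be[u(t,x)^{2}]<\infty$. The lower bound in the key estimate is essential here, since it is what rules out miraculous cancellations in $\int I_{t}(\xi)\mu(\dd\xi)$.

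\medskip

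The main technical obstacle is establishing the lower bound in $I_{t}(\xi)\asymp (1+|\xi|^{2\beta})^{-1}$: the upper bound uses only $|R(t,t)+R(u,u')-R(u,t)-R(t,u')|\lesssim |t-(u\wedge u')|^{\beta}$, which is a direct consequence of \eqref{hyp.R} and the triangle inequality, but the lower bound requires that the non-negative measure arising from the covariance $R(\dd s,\dd s')$ not be dominated by negative off-diagonal terms. In particular one must verify that, after the integration by parts and the cutoffs produced by $\phi'$, the contribution of the diagonal neighborhood of $(t,t)$ survives with the correct $|\xi|^{-2\beta}$ scaling; this is exactly where the two-sided comparison $\asymp$ in \eqref{hyp.R} (and not just the upper bound) is used.
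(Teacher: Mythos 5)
Your overall strategy --- reducing everything to the Fourier--space variance $\int_{\R^d}I_t(\xi)\,\mu(\dd\xi)$ and integrating by parts in time so that the two-sided bound \eqref{hyp.R} on the rectangular increments of $R$ can be exploited --- is exactly the ``stochastic'' route the paper takes, and the object you are ultimately aiming for (a double integral of $\phi'(u)\phi'(u')$ against the rectangular increment of $R$, which scales like $|\xi|^{-2\beta}$) is the right one. However, your central displayed identity for $I_t(\xi)$ is false as written, and the error is fatal to the asymptotics you then claim. You anchored the fundamental theorem of calculus at $s=t$, writing $\phi(s)=1-\int_s^t\phi'(u)\,\dd u$; expanding then leaves the un-suppressed constant $R(t,t)$ plus cross terms of order $|\xi|^{-\beta}$, and these do \emph{not} cancel. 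Concretely, for $R(s,s')=s\wedge s'$ your formula evaluates to $t+O(|\xi|^{-2})$, whereas the true value is $I_t(\xi)=(1-e^{-|\xi|^2t})/|\xi|^{2}$; so your own identity contradicts the bound $I_t(\xi)\asymp(1+|\xi|^{2\beta})^{-1}$ you want to extract from it. The correct expansion anchors at $s=0$, i.e.\ $\phi(s)=\phi(0)+\int_0^s\phi'(u)\,\dd u$ with $\phi(0)=e^{-t|\xi|^2/2}$ exponentially small, so that the constant and cross terms are exponentially suppressed and only $\int_0^t\!\int_0^t\phi'(u)\phi'(u')\,[R(t,t)-R(u,t)-R(t,u')+R(u,u')]\,\dd u\,\dd u'$ survives at order $|\xi|^{-2\beta}$; after integrating in $\mu(\dd\xi)$ this is precisely formula \eqref{eqn.u.L2} of the paper.

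The second gap is that the expression $\int_0^t\!\int_0^t\phi(s)\phi(s')R(\dd s,\dd s')$ and the integration by parts are treated as routine, but under \eqref{hyp.R} alone $R$ need not generate a signed measure of bounded variation on $[0,t]^2$ (e.g.\ fractional Brownian motion with $2H=\beta<1$), so neither the integral against ``$R(\dd s,\dd s')$'' nor the formal integration by parts is a priori meaningful. Making this rigorous is where essentially all of the paper's work lies: the Wiener integral is defined through a Wong--Zakai regularization (Definition \ref{def.int}), the integration by parts is carried out at the discrete level (Lemma \ref{lemma.ibp}), and the resulting boundary terms $\ci_0,\dots,\ci_4$ --- which are individually divergent and only converge after being recombined with pieces of the main term --- are controlled in Lemmas \ref{lemma.i3i4}--\ref{lemma.a00}. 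Your closing remark about the lower bound is essentially right and can be made precise: the rectangular increment equals $\be[(X_t-X_u)(X_t-X_{u'})]$, is continuous, nonvanishing for $u,u'<t$ by \eqref{hyp.R}, and positive on the diagonal, hence positive everywhere; so the two-sided comparison does transfer to the double integral and yields the necessity of \eqref{hyp.mu}.
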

\noindent
As the reader might see, our conditions \eqref{hyp.R} and \eqref{hyp.mu} are simple enough, while giving an if and only if condition of existence.

It is well-known that 
there are essentially two possible ways to consider equation \eqref{eqn.anderson}: the stochastic method based on Wiener integrals and the path-wise method relying on Young type integration. Interestingly, 
we have been able to prove Theorem \ref{thm0} resorting to both methods. Let us briefly explain our approaches:

\noindent
\emph{(1)}
The stochastic 
  method  is  based on the variance computations for Wiener integrals, involving the heat kernel and the covariance function of $W$. 
  The additional ingredients in our proof with respect to previous works (see e.g.   \cite{BT, TX})   are   some subtle partial integration by parts, which  are at the heart of the proof of Theorem \ref{thm0}.

\noindent
\emph{(2)}
The path-wise method resorts to the action of the heat semigroup on Besov spaces.  Once preliminary notions on harmonic analysis are given, it yields quite a simple solution to our problem and also brings some information about the regularity of the solution considered as a H\"older continuous function for free. However, we should stress the fact that the conditions obtained in this framework are only sufficient and slightly non optimal (namely condition~\eqref{hyp.mu} in Theorem \ref{thm0} is replaced by $\int_{\R^{d}} (1+|\xi|^{2\beta-\epsilon})^{-1} \mu(\dd \xi)<\infty$ for some $\epsilon>0$).
 
 Let us   mention that   we  have been focusing  on the additive Gaussian case in this paper in order to investigate the limits of our methods on a simple enough case. An interesting while   more demanding problem would be to handle the multiplicative case. This will be dealt with in some subsequent papers.  

Here is the organization of the paper. In the next section, we  set up some preliminary material on the Gaussian noises  we are dealing with. In Section \ref{section.solution}, we solve equation \eqref{eqn.anderson} by the stochastic method we have mentioned above. In Section \ref{section.solution2} we focus on path-wise techniques.

\paragraph{\textbf{Notations.}}
In the remainder of the article, all generic constants will be denoted by $K$, and their value 
may vary from different occurrences. We  denote by $p_{t}(x)$ the
$d$-dimensional heat kernel $p_{t}(x)=(2\pi
t)^{-d/2}e^{-|x|^2/2t} $, for any $t > 0$, $x \in \R^d$. $\N$ stands for the set of natural numbers: $\{0,1,2,\dots\}$.

\section{Noise model}\label{section.noise}

Let us start by introducing some basic notions on Fourier transforms
of functions: the space of real-valued infinitely differentiable
functions with compact support is denoted by $\mathcal{D} (
\mathbb{R}^d)$ or $\mathcal{D}$. The space of Schwartz functions is
denoted by $\mathcal{S} ( \mathbb{R}^d)$ or $\mathcal{S}$. Its dual,
the space of tempered distributions, is $\mathcal{S}' (
\mathbb{R}^d)$ or $\mathcal{S}'$. If $u$ is a vector  of tempered
distributions from  $\mathbb{R}^d$ to $\mathbb{R}^n$,  then we write
$u \in \mathcal{S}' (\mathbb{R}^d, \mathbb{R}^n)$. The Fourier
transform is defined with the normalization
\[ \mathcal{F}u ( \xi)  = \int_{\mathbb{R}^d} e^{- \imath \langle
   \xi, x \rangle} u ( x) \dd x, \]
so that the inverse Fourier transform is given by $\mathcal{F}^{- 1} u ( \xi)
= ( 2 \pi)^{- d} \mathcal{F}u ( - \xi)$.

 Let    $\mu $ be a non-negative   measure on $\R^{d}$. 
 The spatial covariance of our noise will be determined by a Hilbert space called $\ch$, defined as the completion  of $\cs(\R^{d})$ under the inner product:
 \begin{eqnarray}\label{eqn.cov.x}
\langle  \varphi, \psi  \rangle_{\ch} := \int_{\R^{d}} \cf(\varphi) (\xi) \overline{\cf(\psi)(\xi)}\mu(\dd\xi). 
\end{eqnarray}

As far as the time covariance of our noise is concerned, we shall consider   a continuous positive definite function $R$  on $\R_{+}^{2}$.
For convenience, 
the rectangular increments of $R$ will be denoted,
 for $s<t$ and $u<v$, by 
 \begin{eqnarray}\label{eqn.cov.t}
R 
\lp\begin{matrix}
s&t\\ u&v
\end{matrix}\rp
 = R(v, t) -R(v, s)-R(u,t)+R(u, s). 
\end{eqnarray}
We also denote by $\ce(\R_{+})$ the space of simple functions on $\R_{+}$.  
With those preliminary notations in hand, our noise is defined as follows: 
\begin{definition}\label{def.gau}
On a complete probability space
$(\Omega, \mathscr{F},\bp)$ we consider a Gaussian noise $W$ encoded by a
centered Gaussian family $\{W(h) ; \, h\in \ce(\R_{+})\times \cs(\R^{d})\}$, whose covariance structure
is given as follows: consider $g = \mathbf{1}_{[s,t]} \otimes \varphi $ and $h = \mathbf{1}_{[u,v]} \otimes \psi  $ with $ s<t $, $u<v$ and $ \varphi, \psi\in \cs(\R^{d}) $. Then we have
 \begin{eqnarray}\label{eqn.cov.w}
\be(W(g)W(h))  &=&  R 
\lp\begin{matrix}
s&t\\ u&v
\end{matrix}\rp
\int_{\R^{d}} \cf(\varphi) (\xi) \overline{\cf(\psi)(\xi)}\mu(\dd\xi),  
\end{eqnarray}
where $\cf \varphi$ refers to the Fourier transform with respect to the space variable only. 
\end{definition}

In this paper, due to the singularities of the heat kernel and of our covariance function, we will define our Wiener integrals via regularization. This is the contents of the definition below. 
   \begin{definition}\label{def.int}
Let $g $ be a measurable function on $ \R_{+}\times \R^{d}$ such that 
 $g(s,\cdot) \in \ch$, where $\ch$ is the Hilbert space defined by \eqref{eqn.cov.x}. For $\ep>0$ we set $t_{k}=t_{k}^{\ep}=k\ep$, $k\in \N$ and also $t_{\ep}= t-\sqrt[3]{\ep} $. We define
 \begin{eqnarray}\label{eqn.wiener}
   \int_{0}^{t}  \int_{\R^{d}} g(s, y)  W(\dd y, \dd s) 
    &=&
     \lim_{\ep\to 0+}  \sum_{0\leq t_{k}<t_{\ep}} \int_{t_{k}}^{t_{k+1}}\int_{\R^{d}} g(t_{k}, y) W(\dd y, \dd s)     ,
\end{eqnarray}
 whenever the $L^{2}(\Omega)$-limit of the right-hand side exists.  
Notice that the right-hand side of \eqref{eqn.wiener} is understood thanks to \eqref{eqn.cov.w}. 
  \end{definition}
  
\begin{remark}
Another natural way to introduce the Wiener integrals with respect to $W$,
which can be found in  the literature  (see e.g. \cite{HHNT, HLuN, HNS}), 
   is to consider the linear Gaussian space $\{W(h): h\in \cg\}$, where $\cg$ is the completion of $\ce\otimes \cs(\R^{d})$ with respect to the inner product \eqref{eqn.cov.w}. 
However, with the general definition of   Gaussian noise given  in Definition \ref{def.gau}, it is no longer convenient to investigate the Hilbert space $\cg$  if one wishes to solve an additive heat equation.    In Definition \ref{def.int}, we have adopted instead a Wong-Zakai type approximation similar to those suggested in \cite{GNRV, RV2} and not to ask for a complete understanding of the space $\cg$.  
\end{remark}


\section{Stochastic heat equation}\label{section.solution}


 Let $W$ be the Gaussian field introduced in Definition \ref{def.gau}. As mentioned in the introduction,   we are concerned with the heat equation \eqref{eqn.anderson} with additive noise on $\R^{d}$.
In this case it is well known (see e.g. \cite{DZ, TX}) that an explicit solution to \eqref{eqn.anderson}  should be given by the so-called stochastic convolution. Namely, for all $t>0$ and $x\in \R^{d}$, the solution $u$ to \eqref{eqn.anderson} is expressed as 
\begin{eqnarray}\label{eqn.solution}
u(t, x) &=&   \int_{0}^{t} \int_{\R^{n}} p_{t-s}(x-y)  W(\dd y, \dd s)
,
\end{eqnarray}
where $p_{t}$ stands for the heat kernel mentioned in the introduction, and where \eqref{eqn.solution} is 
 understood as a Wiener integral compatible with Definition  \ref{def.int}. 
 More generally, we will focus on the definition of a convolution of the form:
\begin{equation}\label{eq:def-convolution}
  \iot \ird g (s,y) \, W( \dd y, \dd s),
\end{equation}
for a deterministic kernel $g$.
 We wish to find optimal conditions on $R$ and $\mu$ such that expression \eqref{eqn.solution} makes sense.

\subsection{A discrete integration by parts formula}
In this section our computations rely on an   elementary   discrete integration by parts  formula.
For convenience, let us first introduce the following notation:
  \begin{notation}\label{notation.int}
  Consider a small constant 
   $\ep>0$, two positive numbers $s\geq 0$ and $t>0$, and set   $t_{k}=s+k\ep$ for $k\geq 0 $.  Let $f$ be a function on $\R_{+}$.  We 
   define a regularization of the   integral $\int_{0}^{t} f_{u}du$ in the following way:
 \begin{eqnarray*}
\int_{s}^{t } f_{u}\dd ^{\ep} u &=& \ep \sum_{s\leq t_{k}< t} f_{t_{k}} .
\end{eqnarray*}
 \end{notation} 
 Observe that for the discretized integral defined in   Notation \ref{notation.int}, the following elementary change of variables formula holds true:
 \begin{eqnarray}\label{eqn.notatione}
\int_{0}^{t} f_{s}\dd ^{\ep}s =  \int_{ \ep}^{t+ \ep} f_{s-\ep}\dd ^{\ep}s=  \int_{-\ep}^{t-\ep} f_{s+\ep}\dd ^{\ep}s. 
\end{eqnarray}

Let us now state our main technical tools, which is a partial integration by parts formula for the covariance function of $W$.

\begin{lemma}\label{lemma.ibp}
Let $t$, $\tilde{t}$ be two strictly positive numbers. Consider two   continuous functions
   $R$ and $\Gamma$     on   $[0,t] \times [0,\tilde{t}]$.   
For $\ep, \tilde{\ep}>0$, set 
\begin{eqnarray}\label{eqn.A}
 \ca(t, \tilde{t})&:=&
 \int_{0}^{\tilde{t}}\int_{0}^{t} \Gamma(s,s')
    R 
\lp\begin{matrix}
s'&s'+\tilde{\ep}\\ s &s +\ep
\end{matrix}\rp
      \dd ^{\ep}s\dd ^{\tilde{\ep}}s'  , \quad \quad t, \tilde{t}\geq 0, 
     \end{eqnarray}
     where the discretized integral is defined in Notation \ref{notation.int}.
Then   $\ca (t, \tilde{t})$ can be decomposed as follows:
 \begin{eqnarray}\label{eqn.aa}
\ca(t, \tilde{t})=\ca_{0}(t, \tilde{t}) +\ci_{0}(t, \tilde{t})+\ci_{1}(t, \tilde{t})+\ci_{2}(t, \tilde{t})+\ci_{3}(t, \tilde{t})+\ci_{4}(t, \tilde{t}),
\end{eqnarray}
where $\ca_{0}(t,\tilde{t})$ is the main term of an integration by parts:
\begin{align}
 \ca_{0}(t,\tilde{t})=&  \int_{0}^{\tilde{t}}\int_{0}^{t} 
    \Gamma \lp\begin{matrix}
s'-\tilde{\ep}&s' \\ s-\ep &s  
\end{matrix}\rp    R(s, s')  
      \dd ^{\ep}s\dd ^{\tilde{\ep}}s' ,
      \label{eqn.a0}
      \end{align}
      while $\ci_{0}$, \dots, $\ci_{4}$ are boundary terms defined respectively by: 
      \begin{align}
\ci_{0}(t,\tilde{t}) =&   \lp \int_{\tilde{t}}^{\tilde{t}+\tilde{\ep}}\int_{t}^{t+\ep}   -   \int_{0}^{\tilde{\ep}}\int_{0}^{\ep}    \rp \Gamma(s-\ep,s'-\tilde{\ep})
   R(s, s')  
      \dd ^{\ep}s\dd ^{\tilde{\ep}}s'
 \nonumber  
 \\
 :=& \ci_{01}(t,\tilde{t})-\ci_{00}(t,\tilde{t})  ,
 \label{eqn.i0}
  \\
     \ci_{1}(t,\tilde{t}) =&     \int_{\tilde{\ep}}^{\tilde{t}} \int_{t}^{t+\ep}
       ( \Gamma(s-\ep,s'-\tilde{\ep})-\Gamma(s-\ep,s'))
      R(s, s')  
      \dd ^{\ep}s\dd ^{\tilde{\ep}}s' 
   \nonumber
   \\
   & -   \int_{0}^{\tilde{\ep}} \int_{t}^{t+ {\ep}}
    \Gamma(s-\ep,s' )
      R(s, s')  
      \dd ^{\ep}s\dd ^{\tilde{\ep}}s'
   \nonumber   \\
      :=&\ci_{11}(t,\tilde{t})- \ci_{10}(t,\tilde{t}),
  \label{eqn.i1}    \\
  \ci_{2}(t,\tilde{t})=&        \int_{\tilde{t}}^{\tilde{t}+\tilde{\ep}}  \int_{\ep}^{t}
    (\Gamma(s-\ep,s'-\tilde{\ep}) - \Gamma(s, s'-\tilde{\ep}) )
      R(s, s')  
      \dd ^{\ep}s\dd ^{\tilde{\ep}}s'
     \nonumber \\
      & -     \int_{\tilde{t}}^{\tilde{t}+\tilde{\ep}}  \int_{0}^{\ep}
       \Gamma (s, s'-\tilde{\ep})R(s,s')\dd ^{\ep}s\dd ^{\tilde{\ep}}s'
       \nonumber   \\
      :=&\ci_{21}(t,\tilde{t})- \ci_{20}(t,\tilde{t}) ,
 \label{eqn.i2}     \\
      \ci_{3}(t,\tilde{t})=&    \int_{\tilde{\ep}}^{\tilde{t}} \int_{0}^{ \ep}
       (\Gamma(s-\ep, s') - \Gamma(s-\ep, s'-\tilde{\ep})) R(s,s')\dd ^{\ep}s\dd ^{\tilde{\ep}}s' 
      \nonumber \\
       &+   \int_{0}^{\tilde{\ep}} \int_{0}^{ \ep}
        \Gamma(s-\ep, s')R(s,s')\dd ^{\ep}s\dd ^{\tilde{\ep}}s',
    \label{eqn.i3}  
 \\
       \ci_{4}(t,\tilde{t})=&  \int_{0}^{\tilde{\ep}} \int_{\ep}^{ t}
       (\Gamma(s, s'-\tilde{\ep}) - \Gamma(s-\ep, s'-\tilde{\ep})) R(s,s')\dd ^{\ep}s\dd ^{\tilde{\ep}}s'
     \nonumber  \\
   &     +   \int_{0}^{\tilde{\ep}} \int_{0}^{ \ep}
       \Gamma(s, s'-\tilde{\ep})R(s,s')\dd ^{\ep}s\dd ^{\tilde{\ep}}s'  .
       \label{eqn.i4}
\end{align}

\end{lemma}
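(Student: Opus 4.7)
The identity is purely algebraic: no analysis is involved, only a discrete Abel-type summation combined with the shift formula \eqref{eqn.notatione}. I plan to proceed in four short steps, the last being essentially bookkeeping.

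\emph{Step 1 (Expansion).} First I would expand the rectangular increment of $R$ appearing in the definition of $\ca(t,\tilde t)$ as
\begin{equation*}
R\begin{pmatrix} s' & s'+\tilde\ep \\ s & s+\ep\end{pmatrix}
= R(s+\ep,s'+\tilde\ep) - R(s+\ep,s') - R(s,s'+\tilde\ep) + R(s,s'),
\end{equation*}
which splits $\ca(t,\tilde t)$ into four discrete double integrals of $\Gamma(s,s')$ against one of the four $R$-values on the box $[0,t]\times[0,\tilde t]$, with alternating signs $+,-,-,+$.

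\emph{Step 2 (Reindexing).} I would then apply the discrete change of variables \eqref{eqn.notatione} to each of the four pieces in order to absorb the shifts $+\ep,+\tilde\ep$ from $R$ into $\Gamma$, so that the $R$-factor becomes $R(s,s')$ in every term. This pushes the argument of $\Gamma$ to one of $(s-\ep,s'-\tilde\ep)$, $(s-\ep,s')$, $(s,s'-\tilde\ep)$, or $(s,s')$, and simultaneously shifts the domain of integration to $[\ep,t+\ep]\times[\tilde\ep,\tilde t+\tilde\ep]$, $[\ep,t+\ep]\times[0,\tilde t]$, $[0,t]\times[\tilde\ep,\tilde t+\tilde\ep]$, or $[0,t]\times[0,\tilde t]$, respectively.

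\emph{Step 3 (Bulk and residues).} On the common interior rectangle $[\ep,t]\times[\tilde\ep,\tilde t]$ the four shifted $\Gamma$-terms assemble exactly into the rectangular increment $\Gamma\begin{pmatrix}s'-\tilde\ep & s' \\ s-\ep & s\end{pmatrix}$, producing the integrand of $\ca_0$; the extension from $[\ep,t]\times[\tilde\ep,\tilde t]$ to the full box $[0,t]\times[0,\tilde t]$ appearing in \eqref{eqn.a0} is to be compensated by the boundary terms $\ci_3,\ci_4$ and the negative pieces $\ci_{00},\ci_{10},\ci_{20}$. After subtracting $\ca_0$, the residual contributions live on the six thin strips $\{s\in[0,\ep]\}$, $\{s\in[t,t+\ep]\}$, $\{s'\in[0,\tilde\ep]\}$, $\{s'\in[\tilde t,\tilde t+\tilde\ep]\}$, together with their corner intersections.

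\emph{Step 4 (Identification).} Finally I would organise these residual strips region by region: the four corners of the enlarged rectangle $[0,t+\ep]\times[0,\tilde t+\tilde\ep]$ produce $\ci_0=\ci_{01}-\ci_{00}$; the end-strip $\{s\in[t,t+\ep]\}$ yields $\ci_1$, with the differences of $\Gamma$-values telescoping into the partial increment $\Gamma(s-\ep,s'-\tilde\ep)-\Gamma(s-\ep,s')$ visible in \eqref{eqn.i1}; the symmetric strip $\{s'\in[\tilde t,\tilde t+\tilde\ep]\}$ yields $\ci_2$; the two initial strips $\{s'\in[0,\tilde\ep]\}$ and $\{s\in[0,\ep]\}$ yield $\ci_3$ and $\ci_4$ respectively. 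The only real obstacle is precisely this Step~4: assigning to every sub-slab of the $3\times 3$ decomposition of $[0,t+\ep]\times[0,\tilde t+\tilde\ep]$ its signed contribution coming from each of the four terms produced in Step~2, and checking that the final regrouping reproduces \eqref{eqn.i0}--\eqref{eqn.i4} line by line. Drawing the picture and tracking the four signs $+,-,-,+$ is by far the most efficient way to carry out this bookkeeping.
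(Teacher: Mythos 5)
Your proposal follows exactly the paper's own route: expand the rectangular increment of $R$ into its four terms, shift each one via \eqref{eqn.notatione} so that the factor $R(s,s')$ becomes common to all, reassemble the bulk into the rectangular increment of $\Gamma$ over $[0,t]\times[0,\tilde{t}]$ (which is $\ca_0$), and sort the leftover strip and corner contributions into $\ci_0,\dots,\ci_4$. One small correction to your Step 4: only the top-right and bottom-left corners of $[0,t+\ep]\times[0,\tilde{t}+\tilde{\ep}]$ enter $\ci_0$, while the other two corners appear as $\ci_{10}$ and $\ci_{20}$ inside $\ci_1$ and $\ci_2$, and further corner pieces $\int_0^{\tilde{\ep}}\int_0^{\ep}$ sit in $\ci_3$ and $\ci_4$ --- but this is precisely the line-by-line bookkeeping you already flag as remaining, and it goes through as you describe.
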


\begin{proof}
Start from the definition \eqref{eqn.A} of $\ca(t, \tilde{t})$, and recall our notation \eqref{eqn.cov.t} for the rectangular increments of $R$. Then a series of elementary   change of variables  resorting to \eqref{eqn.notatione} yields 
\begin{align}
 \ca  (t,\tilde{t}) =&    \int_{0}^{\tilde{t}} \int_{0}^{ t} \Gamma(s,s')
    \lp R(s+\ep, s'+\tilde{\ep})  -R(s+\ep, s') -R(s, s'+\tilde{\ep})+ R(s, s') \rp 
      \dd ^{\ep}s\dd ^{\tilde{\ep}}s'  
 \nonumber
      \\
      =&   \int_{\tilde{\ep}}^{\tilde{t}+\tilde{\ep}} \int_{\ep}^{ t+\ep}  \Gamma(s-\ep,s'-\tilde{\ep})
      R(s, s')  
      \dd ^{\ep}s\dd ^{\tilde{\ep}}s'  
       -  \int_{0}^{\tilde{t}} \int_{\ep}^{t+ \ep}   \Gamma(s-\ep,s')
      R(s, s')  
      \dd ^{\ep}s\dd ^{\tilde{\ep}}s'  
  \nonumber
      \\
      &\quad-   \int_{\tilde{\ep}}^{\tilde{t}+\tilde{\ep}} \int_{0}^{ t}   \Gamma(s,s'-\tilde{\ep})
      R(s, s')  
      \dd ^{\ep}s\dd ^{\tilde{\ep}}s'  
       +    \int_{0}^{\tilde{t}} \int_{0}^{ t} \Gamma(s, s')
      R(s, s')  
      \dd ^{\ep}s\dd ^{\tilde{\ep}}s'  
    .
    \nonumber
\end{align}
We now rearrange those terms by separating the interval $[0,t]$ from other intervals of length~$\ep$ and $\tilde{\ep}$. We get
\begin{eqnarray}\label{eqn.a}
\ca(t,\tilde{t}) &=&  
      \ca_{0} (t,\tilde{t})+ \ca_{1}(t,\tilde{t}) -(\ca_{21}(t,\tilde{t})-\ca_{22}(t,\tilde{t})) -(\ca_{31}(t,\tilde{t})-\ca_{32}(t,\tilde{t})),   
\end{eqnarray}

where $\ca_{0} (t,\tilde{t})$ is defined in \eqref{eqn.a0} and  
\begin{eqnarray*}
  \ca_{1} (t,\tilde{t})&=&  \lp \int_{\tilde{\ep}}^{\tilde{t}+\tilde{\ep}} \int_{\ep}^{ t+\ep} - \int_{0}^{\tilde{t}} \int_{0}^{ t} \rp \Gamma(s-\ep,s'-\tilde{\ep})
      R(s, s')  
      \dd ^{\ep}s\dd ^{\tilde{\ep}}s'  
      \\
        \ca_{21}(t,\tilde{t})-\ca_{22}(t,\tilde{t}) &=&  \lp\int_{0}^{ \tilde{t}} \int_{t}^{t+\ep}  - \int_{0}^{ \tilde{t}} \int_{0}^{ \ep}  \rp
       \Gamma(s-\ep,s')
      R(s, s')  
      \dd ^{\ep}s\dd ^{\tilde{\ep}}s' 
        \\
       \ca_{31}(t,\tilde{t})-\ca_{32}(t,\tilde{t}) &=&   \lp\int_{\tilde{t}}^{ \tilde{t}+\tilde{\ep}} \int_{0}^{t }  -\int_{0}^{\tilde{\ep}}\int_{0}^{ t}  \rp
       \Gamma(s, s'-\tilde{\ep})
      R(s, s')  
      \dd ^{\ep}s\dd ^{\tilde{\ep}}s'. 
\end{eqnarray*}
Next  we   decompose  the rectangles $[\ep, t+\ep]\times [\tilde{\ep}, \tilde{t}+\tilde{\ep}]  $ and $[0,t]\times[0,\tilde{t}]$ in order to get
\begin{align}\label{eqn.a1}
  \ca_{1}(t,\tilde{t})
     =&
       \bigg( \int_{\tilde{t}}^{\tilde{t}+\tilde{\ep}}\int_{t}^{t+\ep} +\int_{\tilde{t}}^{\tilde{t}+\tilde{\ep}}\int_{\ep}^{t}  +   \int_{\tilde{\ep}}^{\tilde{t}}\int_{t}^{t+\ep}   -  \int_{0}^{\tilde{\ep}}\int_{0}^{\ep}  
   \nonumber    \\
       & \quad\quad\quad\quad
       - \int_{0}^{\tilde{\ep}}\int_{\ep}^{t} -\int_{\tilde{\ep}}^{\tilde{t}}\int_{0}^{\ep}   \bigg) \Gamma(s-\ep,s'-\tilde{\ep})
      R(s, s')  
      \dd ^{\ep}s\dd ^{\tilde{\ep}}s'
 \nonumber
       \\
       :=&   \ca_{11}(t,\tilde{t})+ \ca_{12}(t,\tilde{t})+ \ca_{13}(t,\tilde{t})- \ca_{14}(t,\tilde{t})- \ca_{15}(t,\tilde{t})- \ca_{16}(t,\tilde{t}) \,.
\end{align}

Now substituting \eqref{eqn.a1} into \eqref{eqn.a} and rearranging the terms we obtain
\begin{eqnarray*}
\ca (t,\tilde{t})&=& \ca_{0}(t,\tilde{t})
+( \ca_{11}(t,\tilde{t})-\ca_{14}(t,\tilde{t}))+
( \ca_{13}(t,\tilde{t}) -\ca_{21}(t,\tilde{t}))
\\
&&+(\ca_{12}(t,\tilde{t})-\ca_{31}(t,\tilde{t}))+( \ca_{22}(t,\tilde{t})-\ca_{16}(t,\tilde{t}))+( \ca_{32}(t,\tilde{t})-\ca_{15}(t,\tilde{t})).
\end{eqnarray*}
The identity \eqref{eqn.aa} then follows  from 
  the expression of $\ci_{i}$  in \eqref{eqn.i0}-\eqref{eqn.i4} and  observing that 
\begin{eqnarray*}
\ci_{0}= \ca_{11}-\ca_{14}, \quad
\ci_{1}= \ca_{13} -\ca_{21} , \quad \ci_{2} = \ca_{12}-\ca_{31},  \quad \ci_{3}=\ca_{22}-\ca_{16}, \quad \ci_{4}=\ca_{32}-\ca_{15}.
\end{eqnarray*}
This completes the proof of the lemma.
  \end{proof}

\subsection{$L^{2}$ convergence for the stochastic heat equation}
We will now use the integration by parts formula stated in Lemma \ref{lemma.ibp} in order to estimate the Wiener integral defining our solution $u$. 
In the next result, we first calculate the $L^{2}(\Omega)$-norm of the Wong-Zakai type approximation in~\eqref{eqn.wiener}. 

\begin{lemma}\label{lemma.ue.L2}
Consider a small constant $\ep>0$, and $(t, x) \in \R_{+}\times \R^{d}$. Denote  $t_{\ep}=t-\sqrt[3]{\ep}$.  Let $h:[0, t)\to \R^{d} \to \R_{+}$ be defined by  $h  (s,y)=       p_{t-s} (x-y) $, where we recall that $p_{t}$ stands for the heat kernel on $\R^{d}$ (see Notation  at  the end of the introduction). We define $u_{\ep}(t, x)$ by:
\begin{eqnarray}\label{eqn.ue}
u_{\ep}(t, x) &=& \sum_{0\leq t_{k}<t_{\ep}} \int_{t_{k}}^{t_{k+1}}\int_{\R^{d}} h(t_{k}, y) W(\dd y, \dd s) .
\end{eqnarray}  
    Then the following holds true:
    
\noindent
\emph{(i)} The covariance between $u_{\ep}(t, x)$ and $u_{\tilde{\ep}}(t, x)$   can be expressed as:
 \begin{align}\label{eqn.ue.l2}
&
\be(u_{\ep}(t, x)u_{\tilde{\ep}}(t, x)) 
\nonumber
\\
&\quad= (\ep\tilde{\ep})^{-1}\lp
 \ca_{0}(t_{\ep}, t_{\tilde{\ep}}) +\ci_{0}(t_{\ep}, t_{\tilde{\ep}})+\ci_{1}(t_{\ep}, t_{\tilde{\ep}})+\ci_{2}(t_{\ep}, t_{\tilde{\ep}})+\ci_{3}(t_{\ep}, t_{\tilde{\ep}})+\ci_{4}(t_{\ep}, t_{\tilde{\ep}})
 \rp,
\end{align}
where  $ \ca_{0}$, $\ci_{0}$, $\ci_{1}$, $\ci_{2}$, $\ci_{3}$, $\ci_{4} $ are defined by relations \eqref{eqn.a0}-\eqref{eqn.i4},  with  a function $\Gamma$ defined on $[0 , \tau]^{2}$ by:   
\begin{eqnarray}\label{eqn.kk}
\Gamma(s,s') &=&   \int_{\R^{d}} e^{-\frac{(2t-s-s')|\xi|^{2}}{2}} \mu(d\xi) .
 \end{eqnarray}

\noindent
\emph{(ii)}
 Furthermore,  
 the kernel $\Gamma$ given by \eqref{eqn.kk} is differentiable on $[0,t)^{2}$ and for $0\leq s,s'<t$ we have 
\begin{eqnarray}
\frac{\partial \Gamma}{\partial s}  (s,s')=\frac{\partial \Gamma}{\partial s'}  (s,s') &=& \int_{\R^{d}}\frac{|\xi|^{2}}{2} e^{-\frac{(2t-s-s')|\xi|^{2}}{2}} \mu(\dd \xi) 
\label{eqn.dk}
\\
\frac{\partial^{2} \Gamma}{\partial s' \partial s}  (s, s') &=& \int_{\R^{d}} \frac{|\xi|^{4}}{4} e^{-\frac{(2t-s-s')|\xi|^{2}}{2}} \mu(\dd \xi) 
\label{eqn.ddk}
\\
\frac{\partial^{3} \Gamma}{\partial^{2} s' \partial s}  (s, s')=\frac{\partial^{3} \Gamma}{\partial  s' \partial^{2} s} (s, s') &=& \int_{\R^{d}} \frac{|\xi|^{6}}{8} e^{-\frac{(2t-s-s')|\xi|^{2}}{2}} \mu(\dd \xi) .
\label{eqn.dddk}
\end{eqnarray}
\end{lemma}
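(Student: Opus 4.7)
The plan is to establish (i) by expanding the product $u_\ep(t,x)\, u_{\tilde\ep}(t,x)$ as a double Riemann sum and recognizing the result as a discretized integral of the form $\ca(t_\ep,t_{\tilde\ep})$ of Lemma \ref{lemma.ibp}, then to obtain (ii) by simple differentiation under the integral sign.

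For part (i), I would first use bilinearity and the definition \eqref{eqn.ue} of $u_\ep$ to write
\begin{equation*}
\be\!\lp u_\ep(t,x)\,u_{\tilde\ep}(t,x)\rp
= \sum_{0\le t_k<t_\ep}\sum_{0\le t'_j<t_{\tilde\ep}}
\be\!\left[ W(\1_{(t_k,t_{k+1}]}\otimes h(t_k,\cdot))\, W(\1_{(t'_j,t'_{j+1}]}\otimes h(t'_j,\cdot))\right].
\end{equation*}
Applying the covariance formula \eqref{eqn.cov.w} to each summand and using the elementary identity $\cf[p_{t-s}(x-\cdot)](\xi)=e^{-\imath\lan\xi,x\rangle}e^{-(t-s)|\xi|^2/2}$, the phase factor cancels with its conjugate and the two Gaussians collapse into $e^{-(2t-t_k-t'_j)|\xi|^2/2}$, so the spatial factor is precisely $\Gamma(t_k,t'_j)$ as defined in \eqref{eqn.kk}. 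Each summand thus equals $R\!\lp\begin{smallmatrix}t_k&t_{k+1}\\ t'_j&t'_{j+1}\end{smallmatrix}\rp\Gamma(t_k,t'_j)$.

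Next I would recast this double sum as discretized integrals: multiplying and dividing by $\ep\tilde\ep$ and invoking Notation \ref{notation.int} in the form $\ep\sum_{0\le t_k<t_\ep}f(t_k)=\int_0^{t_\ep}f(s)\,\dd^\ep s$, the covariance takes the shape
\begin{equation*}
(\ep\tilde\ep)^{-1}\int_0^{t_{\tilde\ep}}\!\int_0^{t_\ep}\Gamma(s,s')\,R\!\lp\begin{matrix}s'&s'+\tilde\ep\\ s&s+\ep\end{matrix}\rp\dd^\ep s\,\dd^{\tilde\ep}s',
\end{equation*}
which is exactly $(\ep\tilde\ep)^{-1}\ca(t_\ep,t_{\tilde\ep})$ in the notation of \eqref{eqn.A}. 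The kernel $\Gamma$ is continuous on $[0,t_\ep]\times[0,t_{\tilde\ep}]$ by dominated convergence, since on this rectangle $2t-s-s'\geq 2\sqrt[3]{\ep}\wedge 2\sqrt[3]{\tilde\ep}>0$ and $\mu$ is tempered. Lemma \ref{lemma.ibp} applied with $t\leftarrow t_\ep$ and $\tilde t\leftarrow t_{\tilde\ep}$ then yields the decomposition \eqref{eqn.ue.l2}.

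For part (ii), I would differentiate under the integral sign. On any compact subset of $\{(s,s'):s,s'<t\}$ we have $2t-s-s'\ge 2\delta$ for some $\delta>0$, so each derivative $\partial_s^{i}\partial_{s'}^{j}\bigl[e^{-(2t-s-s')|\xi|^{2}/2}\bigr]$ carries at most a polynomial prefactor $(|\xi|^{2}/2)^{i+j}$ times $e^{-\delta|\xi|^{2}}$. This envelope is bounded by $C_N(1+|\xi|^{2})^{-N}$ for any $N$, hence integrable against the tempered measure $\mu$. Dominated convergence therefore justifies differentiation up to third order, giving exactly the formulas \eqref{eqn.dk}--\eqref{eqn.dddk}.

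The only nontrivial step is the discrete-to-continuous bookkeeping in part (i): matching the sum indexed by $0\le t_k<t_\ep$ with the upper limit $t_\ep$ of the discretized integral, and recovering the rectangular increment $R\!\lp\begin{smallmatrix}s'&s'+\tilde\ep\\ s&s+\ep\end{smallmatrix}\rp$ with the correct orientation of $(s,s')$ versus $(\ep,\tilde\ep)$. Once this is arranged, both (i) and (ii) follow immediately by, respectively, a formal invocation of Lemma \ref{lemma.ibp} and a routine dominated-convergence argument.
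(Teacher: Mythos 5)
Your proposal is correct and follows essentially the same route as the paper: both reduce the covariance to the discretized double integral $(\ep\tilde\ep)^{-1}\ca(t_\ep,t_{\tilde\ep})$ of Lemma \ref{lemma.ibp} and identify the spatial factor as $\Gamma$ via $\cf[p_{t-s}(x-\cdot)](\xi)=e^{-\imath\lan\xi,x\rangle}e^{-(t-s)|\xi|^{2}/2}$. The only cosmetic difference is in part (ii), where the paper justifies differentiation under the integral by monotone convergence (the integrands are monotone in $s,s'$) while you use a dominated-convergence envelope; both work, though your appeal to ``$\mu$ tempered'' should really be phrased as the integrability of $|\xi|^{2k}e^{-\delta|\xi|^{2}}$ against $\mu$, which is what the standing hypothesis on $\mu$ actually provides.
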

\begin{proof}
 Recall that $u_{\ep}$ is defined by \eqref{eqn.ue}. On each interval $[t_{k}, t_{k+1}]$ we use the elementary identity
 \begin{eqnarray*}
h(t_{k}, y) = \frac{1}{\ep} \int_{t_{k}}^{t_{k+1}} h(s, y) d^{\ep} s, 
\end{eqnarray*}
which is immediately seen from Notation \ref{notation.int}. Plugging this information into \eqref{eqn.ue} we get  
\begin{eqnarray*}
    u_{\ep}(t,x)
     &=&  \sum_{0\leq t_{k}<t_{\ep}} \int_{t_{k}}^{t_{k+1}}\int_{\R^{d}} h(t_{k}, y) W(\dd y, \dd s)  \\
   &=&
     \frac{1}{\ep} \int_{0}^{t_{\ep}} \dd ^{\ep}s \int_{s}^{s+\ep} \int_{\R^{d}} h(s, y)  W(\dd y, \dd r).
\end{eqnarray*}
Therefore, taking into account 
the covariance function of the Gaussian field $W$ in Definition~\ref{def.gau}, we obtain
  \begin{eqnarray*}
    \be \lc   u_{\ep}(t,x)u_{\tilde{\ep}}(t,x)   \rc 
    &=&  \frac{1}{\ep\tilde{\ep}} \int_{0}^{t_{\tilde{\ep}}}\int_{0}^{t_{\ep}} 
     \lc \tilde{\Gamma}(s,s')   R 
\lp\begin{matrix}
s'&s'+\tilde{\ep}\\ s &s +\ep
\end{matrix}\rp
      \rc
      \dd ^{\ep}s\dd ^{\tilde{\ep}}s' ,
\end{eqnarray*}
where the function $\tilde{\Gamma}$ is defined by
\begin{eqnarray}\label{eqn.tk}
\tilde{\Gamma}(s,s')   &=& \int_{\R^{d}} \cf( h)(\xi) \overline{\cf( h)(\xi)} \mu (\dd \xi).
\end{eqnarray}
Invoking Lemma     \ref{lemma.ibp}, 
  relation \eqref{eqn.ue.l2} is thus easily reduced to show that   $\tilde{\Gamma} = \Gamma$, where $\Gamma$ is given by   \eqref{eqn.kk}. 
 To this aim,  note that 
\begin{eqnarray}\label{eqn.cfp}
&&
\cf(p_{t-s}(x-y)) = e^{-\frac{(t-s)|\xi|^{2}}{2}} e^{-i\xi\cdot x}  
 . 
\end{eqnarray}
Substituting \eqref{eqn.cfp} into \eqref{eqn.tk} we immediately have $\tilde{\Gamma}=\Gamma$, which finishes the proof of \eqref{eqn.ue.l2}. 

The identities \eqref{eqn.dk}-\eqref{eqn.dddk} follows easily by noticing that 
\begin{eqnarray*}
 \cf \lp \frac{\partial }{\partial s}  p_{t-s}(x-y)\rp  =\frac{\partial }{\partial s} \cf(p_{t-s}(y-x))  = \frac{|\xi|^{2}}{2} e^{-\frac{(t-s)|\xi|^{2}}{2}} e^{-i\xi\cdot x} ,
\end{eqnarray*}
and the fact that $ e^{-\frac{(t-s)|\xi|^{2}}{2}}$ is  an increasing function of $s$ in order to apply monotone convergence.  
\end{proof}

With Lemma \ref{lemma.ue.L2} in hand, we will now bound the terms in \eqref{eqn.ue.l2} (see also \eqref{eqn.aa} for more precise definitions) individually. Let us start by  analyzing the terms 
 $\ci_{3}$ and $\ci_{4}$ in \eqref{eqn.ue.l2}.  
\begin{lemma}\label{lemma.i3i4}
Let   $\Gamma$ be
defined by \eqref{eqn.kk} and $R$ be a continuous function satisfying \eqref{hyp.R}.  
Let $\ci_{3}$ and $\ci_{4} $ be given by 
 \eqref{eqn.ue.l2} (see also \eqref{eqn.i3} and \eqref{eqn.i4}). 
 Then the following convergence holds true
\begin{eqnarray*}
\lim_{\ep, \tilde{\ep}\to0+}
\ep^{-1}\tilde{\ep}^{-1} \ci_{3} (t_{\ep}, t_{\tilde{\ep}})&=& \int_{0}^{t} \frac{\partial \Gamma}{\partial s'}(0, s') R(0, s')\dd s' + \Gamma(0 , 0) R(0, 0),
\\
\lim_{\ep, \tilde{\ep}\to0+}
\ep^{-1}\tilde{\ep}^{-1} \ci_{4} (t_{\ep}, t_{\tilde{\ep}}) &=& \int_{0}^{t} \frac{\partial \Gamma}{\partial s}(s, 0) R(s, 0)\dd s' + \Gamma(0 , 0) R(0, 0).
\end{eqnarray*}

\end{lemma}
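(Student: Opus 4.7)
The plan is to reduce both $\ci_3$ and $\ci_4$ to explicit expressions by evaluating the ``tiny'' discrete integrals of width $\ep$ or $\tilde{\ep}$, and then to recognize what remains as a Riemann sum approximation of the claimed limit. By symmetry between the two cases (swap the roles of $(\ep,s)$ and $(\tilde{\ep},s')$), it suffices to treat $\ci_3$ in detail.

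According to Notation~\ref{notation.int}, the discretized integral $\int_0^{\ep} f(s) \dd^{\ep}s$ collapses to $\ep\,f(0)$, and likewise $\int_0^{\tilde{\ep}} f(s')\dd^{\tilde{\ep}}s' = \tilde{\ep}\,f(0)$. Applying these identities to the two pieces of \eqref{eqn.i3} forces $s=0$ throughout, which gives the following simplification after dividing by $\ep\tilde{\ep}$:
\begin{equation*}
\ep^{-1}\tilde{\ep}^{-1}\,\ci_{3}(t_{\ep},t_{\tilde{\ep}})
= \Gamma(-\ep,0)\,R(0,0)
+ \tilde{\ep}^{-1}\int_{\tilde{\ep}}^{t_{\tilde{\ep}}} \bigl[\Gamma(-\ep,s')-\Gamma(-\ep,s'-\tilde{\ep})\bigr]\,R(0,s')\,\dd^{\tilde{\ep}}s'.
\end{equation*}
The first term converges to $\Gamma(0,0)R(0,0)$ by continuity of $\Gamma$ at the origin, which follows from the Fourier representation \eqref{eqn.kk} and dominated convergence (the integrand is dominated by $e^{-t|\xi|^{2}/2}$ for $\ep$ small).

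For the second term, I rewrite the difference quotient as
\begin{equation*}
\frac{\Gamma(-\ep,s')-\Gamma(-\ep,s'-\tilde{\ep})}{\tilde{\ep}}
= \frac{1}{\tilde{\ep}}\int_{s'-\tilde{\ep}}^{s'} \frac{\partial \Gamma}{\partial s'}(-\ep,u)\,\dd u,
\end{equation*}
using the differentiability established in part (ii) of Lemma~\ref{lemma.ue.L2}. As $\ep,\tilde{\ep}\to 0$ with $t_{\tilde{\ep}}\to t$, this converges pointwise in $s'\in(0,t)$ to $\partial_{s'}\Gamma(0,s')$, while the Riemann–sum structure of the discretized integral makes the overall expression approximate $\int_0^{t} \partial_{s'}\Gamma(0,s')\,R(0,s')\,\dd s'$. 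To make this rigorous I would invoke dominated convergence: by the spectral expression \eqref{eqn.dk}, $\partial_{s'}\Gamma(s,u)$ is monotone increasing in $s$, so for $\ep\geq 0$ one has $\partial_{s'}\Gamma(-\ep,u)\le \partial_{s'}\Gamma(0,u)$, giving a uniform majorant. Since $R$ is continuous on $[0,t]^2$, hence bounded there, the remaining question is the finiteness of $\int_0^{t}\partial_{s'}\Gamma(0,u)\,\dd u$, which by Fubini equals $\int_{\R^{d}}[e^{-t|\xi|^{2}/2}-e^{-2t|\xi|^{2}/2}]\mu(\dd\xi)\le \int_{\R^{d}} e^{-t|\xi|^{2}/2}\mu(\dd\xi) < \infty$ under the standard temperedness of $\mu$.

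The main obstacle I anticipate is precisely the dominated-convergence step: one must simultaneously let $\ep\to 0$ (a pointwise shift in the first argument of $\Gamma$) and pass from the discretized Riemann sum to the continuous Lebesgue integral $\int_0^{t}\partial_{s'}\Gamma(0,s')R(0,s')\dd s'$, while keeping uniform control of the integrand in both parameters. The monotonicity of $\partial_{s'}\Gamma$ in its first variable together with the Fourier representation \eqref{eqn.dk} resolves this cleanly, after which the symmetric argument — swapping the roles of $(s,\ep)$ and $(s',\tilde{\ep})$ in \eqref{eqn.i4} and using \eqref{eqn.dk} again — yields the claim for $\ci_{4}$.
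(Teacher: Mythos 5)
Your proof is correct and follows essentially the same route as the paper, which disposes of this lemma in one line by invoking the continuity of $R$, the monotonicity of $\Gamma$ and its derivatives in the spectral representation \eqref{eqn.kk}--\eqref{eqn.dk}, and dominated convergence; you have simply made explicit the collapse of the width-$\ep$ (resp.\ width-$\tilde\ep$) discrete integrals and the identification of the remaining Riemann sum. The domination argument via $\partial_{s'}\Gamma(-\ep,u)\leq\partial_{s'}\Gamma(0,u)$ and the finiteness of $\int_{\R^d}e^{-t|\xi|^2/2}\mu(\dd\xi)$ is exactly the intended justification, since the singularity at $s=s'=t$ is avoided when one argument stays near $0$.
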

\begin{proof} The convergences follow immediately from the continuity of $R$ and the monotonicity of $\Gamma$ and its derivatives and an application of the dominated convergence theorem.  Notice that when one integrates $\Gamma$ defined by \eqref{eqn.kk} there is a singularity at $s=s'=t$. However, this singularity is avoided for the terms $\ci_{3}$ and $\ci_{4}$, as well as for the terms \eqref{eqn.dk}-\eqref{eqn.dddk}.
\end{proof}

In order to proceed with our estimates, 
we now state an elementary lemma giving an upper bound on the increments of the function $R$. 

\begin{lemma}\label{lemma.R.incr}
Let $R$ be a covariance function satisfying relation \eqref{hyp.R} with $\beta>0$. Then for $u,v, t \in [0,\tau]$ we have 
\begin{eqnarray*}
|R(t,u) -R(t,v) | &\leq& K|u-v|^{\beta/2},  
\end{eqnarray*}
where $K$ is a constant depending on $\tau$. 
\end{lemma}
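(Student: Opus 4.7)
The plan is to exploit the positive definiteness of $R$ together with the rectangular estimate \eqref{hyp.R}. Since $R$ is a continuous positive definite kernel on $[0,\tau]^2$, there exists a centered Gaussian process $\{X_r : r \in [0,\tau]\}$ with $\mathbb{E}[X_r X_{r'}] = R(r,r')$. Then for any $t,u,v$ we have the Cauchy--Schwarz inequality
\begin{equation*}
|R(t,u) - R(t,v)|^{2} \;=\; \bigl|\mathbb{E}[X_{t}(X_{u} - X_{v})]\bigr|^{2} \;\leq\; R(t,t)\,\bigl(R(u,u) - 2R(u,v) + R(v,v)\bigr).
\end{equation*}
Since $R$ is continuous on the compact set $[0,\tau]^{2}$, the factor $R(t,t)$ is bounded by a constant depending only on $\tau$, so the task reduces to proving the diagonal estimate
\begin{equation*}
R(u,u) - 2R(u,v) + R(v,v) \;\leq\; K\,|u-v|^{\beta}, \qquad u,v \in [0,\tau].
\end{equation*}

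For the diagonal estimate, I would invoke hypothesis \eqref{hyp.R} with a judicious choice of parameters. Assume without loss of generality that $v < u$. Substituting the value $u$ for the parameter $t$ in \eqref{hyp.R} and taking $s = s' = v$ (so that $v < u$ and $s \wedge s' = v$), the rectangular increment becomes
\begin{equation*}
R(u,u) + R(v,v) - 2R(u,v) \;\leq\; K_{1}\,|u - v|^{\beta},
\end{equation*}
which is exactly the required bound (note that the left-hand side is automatically non-negative by positive definiteness, so the absolute value in \eqref{hyp.R} is harmless).

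Combining the two displays yields $|R(t,u) - R(t,v)| \leq K|u-v|^{\beta/2}$ with a constant $K$ depending on $\tau$ through $\sup_{r \in [0,\tau]} R(r,r)$ and the constant $K_{1}$ from \eqref{hyp.R}. I do not anticipate any real obstacle: the only point to keep in mind is that \eqref{hyp.R} is stated for $s, s' < t$ strictly, but this is satisfied in the substitution above whenever $u > v$, and the case $u = v$ is trivial. The whole argument is essentially the classical fact that for a positive definite kernel, a H\"older diagonal modulus of continuity (of exponent $\beta$) controls the off-diagonal modulus (of exponent $\beta/2$).
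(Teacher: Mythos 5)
Your proof is correct and follows essentially the same route as the paper: both realize $R$ as the covariance of a Gaussian process $X$, write $R(t,u)-R(t,v)=\mathbb{E}[X_t(X_u-X_v)]$, apply Cauchy--Schwarz, and control $\mathbb{E}[|X_u-X_v|^2]$ by specializing the rectangular hypothesis \eqref{hyp.R} to $s=s'=v$, $t=u$. The only cosmetic difference is that you bound $R(t,t)$ by its supremum over $[0,\tau]$ while the paper keeps $R(t,t)^{1/2}$ explicitly; your remark about the strict inequality $s,s'<t$ is a welcome extra precision.
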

\begin{proof}
Let $X$ be a Gaussian process on $[0,\tau]$ with covariance function $R$ and $X_{0}=0$. Then by relation \eqref{hyp.R} we have
\begin{eqnarray}\label{eqn.cov.X}
|\be[(X_{t} -X_{u})(X_{t}-X_{v})] |= | R(t,t)+R(u,v)-R(u,t)-R(t, v) |  \leq  |t-(u\wedge v)|^{\beta}. 
\end{eqnarray}
The lemma then follows from the following relations
\begin{eqnarray*}
|R(t,u) -R(t,v) | = | \be[ X_{t}(X_{u}-X_{v})] | \leq \be[|X_{t}|^{2}]^{1/2} \be [|X_{u}-X_{v}|^{2}]^{1/2} \leq R(t,t)^{1/2}|u-v|^{\beta/2}, 
\end{eqnarray*}
where in the last inequality we have used relation \eqref{eqn.cov.X} with $u=v$.
\end{proof}

In order to handle the term $\ca_{0}(t_{\ep}, t_{\tilde{\ep}})$ in relations \eqref{eqn.a0} and \eqref{eqn.ue.l2} we will artificially introduce some rectangular increments. The lemma below takes care of the convergence of the rectangular increment $\tilde{\ca}_{0}^{\ep, \tilde{\ep}}$ derived from $\ca_{0}(t_{\ep}, t_{\tilde{\ep}})$ (specifically, we replace the term $R(s,s')$ in~\eqref{eqn.a0} by its rectangular increment).  
\begin{lemma}\label{lemma.a0}
Let   $\Gamma$ be defined by \eqref{eqn.kk} and $R$ be a continuous function  satisfying relation~\eqref{hyp.R}.
For $\ep, \tilde{\ep}>0$, we set 
\begin{eqnarray}\label{eqn.a0.tilde}
\tilde{\ca}_{0}^{\ep, \tilde{\ep}}
 = \int_{0}^{t_{\tilde{\ep}}}\int_{0}^{t_{\ep}}  
    \Gamma \lp\begin{matrix}
s'-\tilde{\ep}&s' \\ s-\ep &s  
\end{matrix}\rp 
R \lp\begin{matrix}
s&t \\ s' &t  
\end{matrix}\rp 
      \dd ^{\ep}s\dd ^{\tilde{\ep}}s'  .
\end{eqnarray}
Suppose that the spectral measure $\mu$ satisfies relation \eqref{hyp.mu}.  Then
\begin{eqnarray}\label{eqn.ca.lim}
\lim_{\ep, \tilde{\ep}\to0+}
\ep^{-1}\tilde{\ep}^{-1}
 \tilde{\ca}_{0}^{\ep, \tilde{\ep}} = \int_{0}^{t}\int_{0}^{t} \frac{\partial^{2}\Gamma}{\partial s\partial s'} (s,s') R \lp\begin{matrix}
s&t \\ s' &t  
\end{matrix}\rp 
    \dd s\dd s'   :=\cj.
 \end{eqnarray}

\end{lemma}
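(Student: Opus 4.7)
The plan is to establish~\eqref{eqn.ca.lim} by a dominated convergence argument after recognizing the rescaled rectangular increment of $\Gamma$ as a discrete second-order difference approximating $\frac{\partial^{2}\Gamma}{\partial s\partial s'}$. Since $\Gamma$ is smooth on $[0,t)^{2}$ by part~(ii) of Lemma~\ref{lemma.ue.L2}, the fundamental theorem of calculus yields, for every $(s,s')\in (0,t)^{2}$,
\begin{eqnarray*}
(\ep\tilde\ep)^{-1}\,\Gamma\!\begin{pmatrix}s'-\tilde\ep & s' \\ s-\ep & s\end{pmatrix}
 = (\ep\tilde\ep)^{-1}\!\int_{s-\ep}^{s}\!\!\int_{s'-\tilde\ep}^{s'}\!\frac{\partial^{2}\Gamma}{\partial s\partial s'}(u,v)\,\dd u\,\dd v ,
\end{eqnarray*}
whose right-hand side converges to $\frac{\partial^{2}\Gamma}{\partial s\partial s'}(s,s')$ as $\ep,\tilde\ep\to 0^{+}$. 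Combined with the continuity of $R$, this delivers pointwise convergence of the integrand of $(\ep\tilde\ep)^{-1}\tilde{\ca}_{0}^{\ep,\tilde\ep}$ on $(0,t)^{2}$.

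\textbf{Uniform majorant.} I would next observe from formula~\eqref{eqn.ddk} that $\frac{\partial^{2}\Gamma}{\partial s\partial s'}$ is non-negative and non-decreasing in each of its arguments (the exponential factor $e^{-(2t-s-s')|\xi|^{2}/2}$ is increasing in both $s$ and $s'$). The integral representation displayed above therefore yields the uniform bound $(\ep\tilde\ep)^{-1}\,\Gamma\!\begin{pmatrix}s'-\tilde\ep & s' \\ s-\ep & s\end{pmatrix}\le\frac{\partial^{2}\Gamma}{\partial s\partial s'}(s,s')$. Together with the direct consequence $|R\!\begin{pmatrix}s&t\\s'&t\end{pmatrix}|\le K\,|t-(s\wedge s')|^{\beta}$ of hypothesis~\eqref{hyp.R}, this supplies the candidate integrable majorant $K\,|t-(s\wedge s')|^{\beta}\,\frac{\partial^{2}\Gamma}{\partial s\partial s'}(s,s')$ on $[0,t]^{2}$.

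\textbf{Integrability and conclusion.} The hard part---and the place where hypothesis~\eqref{hyp.mu} enters in an essential way---is to verify that this majorant is Lebesgue-integrable on $[0,t]^{2}$. Using~\eqref{eqn.ddk} and Fubini, this reduces to estimating
\begin{eqnarray*}
\int_{\R^{d}}|\xi|^{4}\,\mu(\dd\xi)\int_{0}^{t}\!\int_{0}^{t} |t-(s\wedge s')|^{\beta}\,e^{-(2t-s-s')|\xi|^{2}/2}\,\dd s\,\dd s'.
\end{eqnarray*}
After the substitution $a=t-s$, $b=t-s'$, symmetrization over $\{a\ge b\}$, and evaluation of the elementary $a$, $b$ integrals, one finds that the inner double integral is of order $|\xi|^{-2\beta-4}$ for $|\xi|$ large and bounded by a constant for $|\xi|$ near the origin. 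Consequently the full $\xi$-integrand is controlled by a multiple of $\min(|\xi|^{4},|\xi|^{-2\beta})\lesssim (1+|\xi|^{2\beta})^{-1}$, and finiteness follows directly from~\eqref{hyp.mu}. With the integrable majorant in hand, applying dominated convergence to the step-function representation of the Riemann sum $(\ep\tilde\ep)^{-1}\tilde{\ca}_{0}^{\ep,\tilde\ep}$---and noting that $t_{\ep},t_{\tilde\ep}\to t$ as the regularization parameters tend to zero---yields~\eqref{eqn.ca.lim}.
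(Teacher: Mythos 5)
Your proposal is correct and follows essentially the same route as the paper's own proof: the paper likewise obtains the pointwise limit and the domination $(\ep\tilde\ep)^{-1}\Gamma\lp\begin{matrix}s'-\tilde\ep&s'\\ s-\ep&s\end{matrix}\rp\leq \frac{\partial^{2}\Gamma}{\partial s\partial s'}(s,s')$ (via the mean value theorem rather than your integral representation, an immaterial difference), bounds the $R$-increment by $K|t-(s\wedge s')|^{\beta}$, and verifies integrability of the majorant by the same splitting into $|\xi|>1$ and $|\xi|\leq 1$ before concluding with dominated convergence. No gaps.
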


\begin{proof}
We first show that   the right-hand side of \eqref{eqn.ca.lim} is integrable. Note first that by relation~\eqref{hyp.R} and taking into account \eqref{eqn.ddk}, we have
\begin{eqnarray}\label{eqn.j.decomp}
\cj &\leq&
 \int_{0}^{t}\int_{0}^{t} \frac{\partial^{2}\Gamma}{\partial s\partial s'} (s,s') \left|R \lp\begin{matrix}
s&t \\ s' &t  
\end{matrix}\rp \right|
    \dd s\dd s'
      \leq  
     \int_{0}^{t}\int_{0}^{t} \frac{\partial^{2}\Gamma}{\partial s\partial s'} (s,s') |t-(s\wedge s')|^{\beta}
    \dd s\dd s'
  \nonumber  \\
    &\leq&  
     \int_{0}^{t}\int_{0}^{t} \int_{\R^{d}} \frac{|\xi|^{4}}{4} e^{-\frac{(2t-s-s')|\xi|^{2}}{2}} \mu(\dd \xi) |t-(s\wedge s')|^{\beta}
    \dd s\dd s'.
 \nonumber   
\end{eqnarray}
We can now split our estimate by writing 
\begin{eqnarray}\label{eqn.jdecomp}
\cj&\leq& \cj_{1} + \cj_{2}
\end{eqnarray}
where the terms $\cj_{1}$ and $\cj_{2}$ are defined by 
\begin{eqnarray*}
 \cj_{1}&=&  
      \int_{|\xi|>1}  \int_{0}^{t}\int_{0}^{t} \frac{|\xi|^{4}}{4} e^{-\frac{(2t-s-s')|\xi|^{2}}{2}} |t-(s\wedge s')|^{\beta}
    \dd s\dd s'  \mu(\dd \xi),
\\
\cj_{2} &=&   \int_{|\xi|\leq 1}    \int_{0}^{t}\int_{0}^{t} \frac{|\xi|^{4}}{4} e^{-\frac{(2t-s-s')|\xi|^{2}}{2}} |t-(s\wedge s')|^{\beta}
    \dd s\dd s'  \mu(\dd \xi).
\end{eqnarray*}

In the following, we bound $\cj_{1}$ and $\cj_{2}$ separately, starting with $\cj_{1}$. Indeed, 
since the   integral defining $\cj_{1}$ is symmetric in $s$ and $s'$, we have
\begin{eqnarray}\label{eqn.j1.bd}
 \cj_{1} &=&   2   \int_{|\xi|>1}  \int_{0}^{t}\int_{0}^{s} \frac{|\xi|^{4}}{4} e^{-\frac{(2t-s-s')|\xi|^{2}}{2}} |t-(s\wedge s') |^{\beta}
    \dd s'\dd s  \mu(\dd \xi)
 \nonumber
    \\
    &=&  2  \int_{|\xi|>1}  \frac{|\xi|^{4}}{4} \int_{0}^{t} e^{-\frac{(t-s)|\xi|^{2}}{2}} \int_{0}^{s}  e^{-\frac{( t-s')|\xi|^{2}}{2}} |t-  s'|^{\beta}
    \dd s'\dd s  \mu(\dd \xi)
  .
\end{eqnarray}
Next, we bound the inner integral in the right-hand side above as follows:
\begin{eqnarray*}
\int_{0}^{s}  e^{-\frac{( t-s')|\xi|^{2}}{2}} |t-  s'|^{\beta}
    \dd s' &\leq& \int_{0}^{\infty} r^{\beta} e^{-\frac{r|\xi|^{2}}{2}} \dd r = 2^{1+\beta} \Gamma(\beta+1) |\xi|^{-(2+2\beta)},
\end{eqnarray*}
where the last identity is obtained by a straightforward change of variable. 
In the same way, it is also readily checked that 
\begin{eqnarray*}
\int_{0}^{t} e^{-\frac{(t-s)|\xi|^{2}}{2}} \dd s &\leq& |\xi|^{-2}. 
\end{eqnarray*}
Plugging those two elementary estimates into \eqref{eqn.j1.bd}, we end up with:
 \begin{eqnarray}\label{eqn.cj1.estimate}
  \cj_{1} &\leq&  K   \int_{|\xi|>1} {|\xi|^{4}}  \frac{1}{ |\xi|^{2\beta+4}}   \mu(\dd \xi) =K   \int_{|\xi|>1}    \frac{1}{ |\xi|^{2\beta }}   \mu(\dd \xi)  <\infty,
\end{eqnarray}
where the last inequality follows from relation \eqref{hyp.mu}.

Let us now turn to an upper bound for $\cj_{2}$ defined in \eqref{eqn.jdecomp}. 
When $|\xi|\leq 1$, 
we have
\begin{eqnarray*}
\frac{|\xi|^{4}}{4} e^{-\frac{(2t-s-s')|\xi|^{2}}{2}} &\leq& 1.
\end{eqnarray*}
Hence $\cj_{2}$ is easily bounded as follows:
\begin{eqnarray}\label{eqn.j2.bdd}
\cj_{2}&\leq & \mu( \xi: |\xi|\leq 1) \int_{0}^{t} \int_{0}^{t} |t-(s \wedge s')|^{\beta} \dd s\dd s' <\infty. 
\end{eqnarray}
Applying the estimate of $\cj_{1}$ and $\cj_{2}$
obtained 
 in \eqref{eqn.cj1.estimate} and  \eqref{eqn.j2.bdd} to relation \eqref{eqn.j.decomp},    the integrability of  the right-hand side of \eqref{eqn.ca.lim} is trivially satisfied.

Next, by  \eqref{eqn.ddk} and the mean value theorem we have
\begin{eqnarray*}
0\leq  \frac{1}{\ep\tilde{\ep}} \Gamma \lp\begin{matrix}
s'-\tilde{\ep}&s' \\ s-\ep &s  
\end{matrix}\rp  = \frac{\partial^{2}\Gamma}{\partial s\partial s'} (s-\ep \theta, s'-\tilde{\ep} \theta') \leq \frac{\partial^{2}\Gamma}{\partial s\partial s'} (s, s') 
\end{eqnarray*}
uniformly in $\ep$ and $\tilde{\ep}$, 
where $\theta$ and $\theta'$ are some numbers between $0$ and $1$, and where the last inequality is due to the fact that the exponential function is monotone. Therefore,  
$\tilde{\ca}_{0}^{\ep, \tilde{\ep}}$   is dominated by  $\cj$.
By the dominated convergence theorem and by taking limits $\ep, \tilde{\ep}\to0+$ for $\tilde{\ca}_{0}^{\ep, \tilde{\ep}}$ we obtain the desired convergence \eqref{eqn.ca.lim}.
\end{proof}

In order to get the convergence of the boundary terms $\ci_{01}$, $\ci_{11}$, and $\ci_{21}$ in \eqref{eqn.i0}-\eqref{eqn.i2}, we need the following technical lemma: 
\begin{lemma}\label{lemma.g1g2}
Denote $t_{\ep}=t-\sqrt[3]{\ep}$ for   $\ep>0$, and let $R$ and $\Gamma$ be as in Lemma \ref{lemma.a0}. 
For $i=1,2,3,4$ we define small constants $\delta_{i}$ and some instants $t_{i} $ in the following way: for $i=1,3$ we take $\delta_{i}\in [0,\ep]$ and $t_{i}$ such that   $t_{i}: 0\leq t_{\ep}-t_{i}\leq \ep $, while for $i=2,4$ we consider $\delta_{i} \in [0, \tilde{\ep}]$ and $t_{i}$ such that   $t_{i}: 0\leq t_{\tilde{\ep}}-t_{i}\leq \tilde{\ep} $. 
On $[0,t]$, define $8$ piecewise continuous functions
\begin{eqnarray*}
u_{i}= u_{i}(u), \quad v_{i} =  v_{i} (v), \quad \quad \text{for }  i=1,2,3,4,
\end{eqnarray*}
satisfying 
 $ |u_{2}-u_{4}| \leq \ep$, $ |v_{2}-v_{4}| \leq \tilde{\ep}$, $0\leq \id-u_{i}\leq \ep$ and $0\leq \id-v_{i}\leq \tilde{\ep}$ for $i=1,3$. Besides, assume that $|u_{i}(u) - u_{i}(u+\ep) |\leq \ep $ and $|v_{i}(u) - v_{i}(u+\tilde{\ep}) |\leq \tilde{\ep} $ for $i=1, 2,3,4$. Denote $g=  \frac{\partial^{2}\Gamma}{\partial s\partial s'}$. 
 Set  
\begin{eqnarray*}
\cg_{1}=\int_{\delta_{1}}^{t_{1}}\int_{\delta_{2}}^{t_{2}} g (u_{1}, v_{1}) R(u_{2}, v_{2})\dd u\dd v  
\quad\quad\text{and}\quad\quad
\cg_{2}=\int_{\delta_{3}}^{t_{3}}\int_{\delta_{4}}^{t_{4}} g (u_{3}, v_{3}) R(u_{4}, v_{4})\dd u\dd v   .
\end{eqnarray*}	
Then the following convergence holds true
\begin{eqnarray*}
\lim_{\ep, \tilde{\ep}\to0+} | \cg_{1}-\cg_{2} | = 0. 
\end{eqnarray*}

\end{lemma}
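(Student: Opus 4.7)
The plan is to decompose $\cg_1 - \cg_2$ into an interior contribution on a common subrectangle plus boundary contributions on narrow strips of width $O(\ep)$ and $O(\tilde{\ep})$, and to show each piece vanishes as $\ep,\tilde{\ep}\to 0^+$. The key tools will be Lemma \ref{lemma.R.incr} for increments of $R$, the spectral representations \eqref{eqn.ddk}--\eqref{eqn.dddk} of $g$ and its derivatives, the integrability hypothesis \eqref{hyp.mu}, and crucially the cutoff $t_\ep = t - \sqrt[3]{\ep}$ underlying the interior rectangle.

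First, introduce the common interior rectangle $D' := [\ep, t_\ep - \ep] \times [\tilde{\ep}, t_{\tilde{\ep}} - \tilde{\ep}]$. The conditions $\delta_i \in [0,\ep]$ and $0 \le t_\ep - t_i \le \ep$ for $i=1,3$ (and analogous bounds with $\tilde{\ep}$ for $i=2,4$) ensure that $D'$ is contained in both integration rectangles, so
\[
\cg_1 - \cg_2 \;=\; \int_{D'} \bigl[ g(u_1,v_1)R(u_2,v_2) - g(u_3,v_3)R(u_4,v_4)\bigr] \dd u\,\dd v + B,
\]
where $B$ collects the four boundary strip integrals. On $D'$, I would split the integrand as $[g(u_1,v_1) - g(u_3,v_3)]R(u_2,v_2) + g(u_3,v_3)[R(u_2,v_2) - R(u_4,v_4)]$. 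For the second piece, Lemma \ref{lemma.R.incr} applied in each coordinate (with $|u_2-u_4|\le \ep$ and $|v_2-v_4|\le\tilde{\ep}$) gives $|R(u_2,v_2)-R(u_4,v_4)|\le K(\ep^{\beta/2}+\tilde{\ep}^{\beta/2})$, while a direct spectral computation based on \eqref{eqn.ddk}, the elementary bound $e^{-c|\xi|^2}\le Kc^{-\beta}/(1+|\xi|^{2\beta})$, and \eqref{hyp.mu} yields $\int_{D'} g \le K(\ep^{-\beta/3}+\tilde{\ep}^{-\beta/3})$; the product is $O(\ep^{\beta/6}+\tilde{\ep}^{\beta/6})$. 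For the first piece, I would use the fundamental theorem of calculus to write $g(u_1,v_1)-g(u_3,v_3) = \int_{u_3}^{u_1}\partial_s g(s,v_1)\dd s + \int_{v_3}^{v_1}\partial_{s'}g(u_3,s')\dd s'$, then apply Fubini (using $u_1,u_3\in[u-\ep,u]$ and analogously for $v$) to reduce each inner integral to an $\ep$-factor times $\int\partial_s g$, which by \eqref{eqn.dddk} and the same spectral argument is $O(\ep^{-(1+\beta)/3})$, so the net contribution is again $O(\ep^{(2-\beta)/3})$.

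For the boundary term $B$, the strips split into two types. Strips near $\{u=0\}$ or $\{v=0\}$ are handled directly, since $g(u_1,v_1)$ there carries a bounded exponential factor $e^{-(t-\ep)|\xi|^2/2}$ making the spectral integrals trivially convergent; combined with the $\ep$-width of the strip this yields a contribution of order $\ep+\tilde{\ep}$. Strips near $\{u=t_\ep\}$ or $\{v=t_{\tilde{\ep}}\}$ instead invoke the cutoff exponential $e^{-\sqrt[3]{\ep}|\xi|^2/2}$, which together with the strip width $\ep$ and the bound $\int |\xi|^2 e^{-\sqrt[3]{\ep}|\xi|^2/2}\mu(\dd \xi) \le K\ep^{-(1+\beta)/3}$ produces an overall $O(\ep^{(2-\beta)/3})$ contribution. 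Summing all pieces gives $|\cg_1-\cg_2|\to 0$.

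The main obstacle lies in controlling the boundary strip near the singular diagonal $\{u=t_\ep\}$ (and symmetrically $\{v=t_{\tilde{\ep}}\}$), where $g$ blows up as the cutoff shrinks. The cube-root cutoff $t_\ep = t - \sqrt[3]{\ep}$ is precisely tuned so that the strip width $\ep$ dominates the growth of $g$ via \eqref{hyp.mu}: a shallower cutoff would let the boundary contribution blow up, while a steeper one would lose control of the interior integral $\int_{D'} g$. The estimates are tight and require a careful splitting of the spectral variable $\xi$ according to whether $\sqrt[3]{\ep}|\xi|^2$ is bounded, in order to balance the decaying exponential against the power $|\xi|^{2\beta}$ appearing in the weight of $\mu$.
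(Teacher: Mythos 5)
Your overall architecture (common subrectangle plus boundary strips, splitting the integrand into an $R$-increment piece and a $g$-increment piece, and using the cube-root cutoff to tame the singularity of $g$) is in the spirit of the paper's proof, but as written the argument has a genuine gap at the endpoint $\beta=2$, which is allowed by \eqref{hyp.R} and is exactly the case of time-independent noise in the examples. All of your rates for the $g$-increment term on $D'$ and for the strips near $\{u=t_\ep\}$ come out as $O(\ep^{(2-\beta)/3})$, which is $O(1)$ when $\beta=2$; and the estimate $\ep\int|\xi|^{2}e^{-\ep^{1/3}|\xi|^{2}/2}\mu(\dd\xi)\leq K\ep^{(2-\beta)/3}$ cannot be improved to a decaying rate under \eqref{hyp.mu} alone (take $\mu$ concentrating near $|\xi|^{2}\sim\ep^{-1/3}$), so the conclusion does not follow from these bounds at $\beta=2$. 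A smaller issue: the product $(\ep^{\beta/2}+\tilde{\ep}^{\beta/2})(\ep^{-\beta/3}+\tilde{\ep}^{-\beta/3})$ contains cross terms such as $\ep^{\beta/2}\tilde{\ep}^{-\beta/3}$, which need not vanish when $\tilde{\ep}\ll\ep$; you need the sharper bound $\int_{D'}g\leq K\max(\ep,\tilde{\ep})^{-\beta/3}$, which does follow from $2t-s-s'\geq \ep^{1/3}+\tilde{\ep}^{1/3}$ but should be stated.

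The paper avoids both problems by never computing a rate. First, it translates the domain of $\cg_{2}$ so that its upper corners coincide with those of $\cg_{1}$; the leftover domain-mismatch strips then sit near $u=0$ or $v=0$, where $g$ is uniformly bounded (the exponent $2t-u-v$ stays of order $t$ there), so they contribute $O(\ep+\tilde{\ep})$ trivially, and no strip near the singular corner $u=t_{\ep}$ ever has to be estimated. Second, on the common domain it converts the small increments into weights, using $\ep^{\beta/2}+\tilde{\ep}^{\beta/2}\leq K(t-(u\wedge v))^{3\beta/2}$ and $\ep+\tilde{\ep}\leq K(t-(u\wedge v))^{3}$, valid because $u\leq t-\ep^{1/3}$ and $v\leq t-\tilde{\ep}^{1/3}$. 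This yields $\ep$-independent dominating functions $(t-(u\wedge v))^{3\beta/2}\int|\xi|^{4}e^{-(2t-u-v)|\xi|^{2}/2}\mu(\dd\xi)$ and $(t-(u\wedge v))^{3}\int|\xi|^{6}e^{-(2t-u-v)|\xi|^{2}/2}\mu(\dd\xi)$, integrable by the same $\cj_{1},\cj_{2}$ computation as in Lemma \ref{lemma.a0} (the cubic weight turns $|\xi|^{6}$ into $|\xi|^{-4}\leq|\xi|^{-2\beta}$ for $|\xi|>1$, using $\beta\leq 2$); dominated convergence with pointwise limit zero then concludes uniformly over $\beta\in(0,2]$. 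Your version can be rescued along the same lines by replacing each final rate computation with a dominated convergence argument in $\xi$ (for instance $\ep|\xi|^{2}e^{-\ep^{1/3}|\xi|^{2}/2}=(\ep^{1/3}|\xi|^{2})^{3}e^{-\ep^{1/3}|\xi|^{2}/2}\,|\xi|^{-4}\leq K|\xi|^{-4}$, with pointwise limit $0$), but some such device is required before the proof is complete.
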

\begin{proof}
Choose $\ep_{1}$ and $\ep_{2} $ such that $\ep_{1}+t_{3}=t_{1}$ and $\ep_{2}+t_{4}=t_{2}$.  
In the integral defining $\cg_{2}$, set $u = u+\ep_{1}$ and $v= v+\ep_{2}$. Setting $u_{j}' = u_{j} (u-\ep_{1})$ and $v_{j}' = v_{j} (v-\ep_{1})$, we get 
\begin{eqnarray*}
\cg_{2} &=&   \int_{\delta_{3}+\ep_{1}}^{t_{3}+\ep_{1}} \int_{\delta_{4}+\ep_{2}}^{t_{4}+\ep_{2}} g(u_{3}',v_{3}' ) R(u_{4}',v_{4}' ) \dd u\dd v
\\
&=&   \int_{\delta_{3}+\ep_{1}}^{t_{1}} \int_{\delta_{4}+\ep_{2}}^{t_{2}} g(u_{3}',v_{3}' ) R(u_{4}',v_{4}' ) \dd u\dd v
.
\end{eqnarray*}
We now introduce a slight   modification of $\cg_{2}$ called $\cg_{2}'$:
\begin{eqnarray*}
\cg_{2}'=\int_{\delta_{1}}^{t_{1}}\int_{\delta_{2}}^{t_{2}} g (u_{3}', v_{3}') R(u_{4}', v_{4}')\dd u\dd v   .
\end{eqnarray*}
We are going to show that
\begin{eqnarray}\label{eqn.g1g2p}
\cg_{1}-\cg_{2}' \to 0 \quad\text{and} \quad\cg_{2}-\cg_{2}' \to 0   \quad\quad\text{as \ }  \ep, \tilde{\ep}\to0+,
\end{eqnarray}
 which then concludes the proof.

Write
\begin{eqnarray}\label{eqn.g.diff}
 \cg_{1}-\cg_{2}'   &=&  \int_{\delta_{1}}^{t_{1}}\int_{\delta_{2}}^{t_{2}} g (u_{1}, v_{1}) ( R(u_{2}, v_{2}) - R(u_{4}', v_{4}') )\dd u\dd v   
 \nonumber
 \\
 &&+  \int_{\delta_{1}}^{t_{1}}\int_{\delta_{2}}^{t_{2}} (g (u_{1}, v_{1}) - g (u_{3}', v_{3}') ) R(u_{4}', v_{4}')\dd u\dd v
 \\
 &&
  := 
  \int_{\delta_{1}}^{t_{1}}\int_{\delta_{2}}^{t_{2}} \cg_{11} \dd u\dd v +  \int_{\delta_{1}}^{t_{1}}\int_{\delta_{2}}^{t_{2}}\cg_{12} \dd u\dd v
  \nonumber
\end{eqnarray}
Let us briefly show how to bound the term $\cg_{11}$. We first write 
\begin{eqnarray}\label{eqn.Rincr}
|R(u_{2}, v_{2}) - R(u_{4}', v_{4}') |  \leq | R(u_{2}, v_{2})-R(u_{4}', v_{2}) | + | R(u_{4}', v_{2})- R(u_{4}', v_{4}') |.
\end{eqnarray}
Owing to the fact that $|u_{2} - u_{4}'|\leq 2\ep$ and $|v_{2} - v_{4}'|\leq 2\tilde{\ep}$ we can invoke Lemma   \ref{lemma.R.incr} and our decomposition \eqref{eqn.Rincr} to get
\begin{eqnarray}\label{eqn.R.incr}
| R (u_{2}, v_{2})-R (u_{4}', v_{4}') | &\leq& K (\ep^{\beta/2}+\tilde{\ep}^{\beta/2} ) . 
\end{eqnarray}
Moreover, according to our standing assumptions the variable $u$ in $\cg_{11} $ satisfies 
\begin{eqnarray*}
u\leq t_{1}\leq t_{\ep} \leq t-\ep^{1/3},
\end{eqnarray*}
and similarly $v\leq t-\tilde{\ep}^{1/3}$. Therefore, 
 \begin{eqnarray}\label{eqn.R.incr2}
  K (\ep^{\beta/2}+\tilde{\ep}^{\beta/2} ) \leq K  (t-(u\wedge v))^{3\beta/2}, 
\end{eqnarray}
 and plugging \eqref{eqn.R.incr2} into \eqref{eqn.R.incr} we get 
 \begin{eqnarray}\label{eqn.R.incr3}
| R (u_{2}, v_{2})-R (u_{4}', v_{4}') | &\leq&   K  (t-(u\wedge v))^{3\beta/2}. 
\end{eqnarray}
Reporting this information into the expression of $\cg_{11}$ in \eqref{eqn.g.diff}, and taking into account the fact that  $g = \frac{\partial^{2} \Gamma}{\partial s\partial s'}$ satisfies  \eqref{eqn.ddk} we thus get
 \begin{eqnarray}\label{eqn.dominate.g1}
|\cg_{11}| \leq    (t-(u\wedge v))^{\frac{3\beta}{2} }
 \int_{\R^{d}}    \frac{|\xi|^{4}}{4} e^{-\frac{(2t-u-v)|\xi|^{2}}{2}}  \mu(\dd \xi) 
  .
\end{eqnarray}
  Similarly we also have
  \begin{eqnarray}\label{eqn.dominate.g2}
  |\cg_{12}| \leq  
 (t-(u\wedge v))^{3 }\int_{\R^{d}} \frac{|\xi|^{6}}{8} e^{-\frac{(2t-u-v)|\xi|^{2}}{2}} \mu(\dd \xi) .
\end{eqnarray}


In a similar way as in \eqref{eqn.j.decomp} it follows that both  functions in the right-hand side of \eqref{eqn.dominate.g1} and \eqref{eqn.dominate.g1} are   integrable, and thus by the dominated convergence   theorem
we are able to  pass the limit $\ep  ,\tilde{\ep}\to0+$ inside the integrals of \eqref{eqn.g.diff}. This yields the  convergence   $\cg_{1}-\cg_{2}' \to 0$.
Observe that a cubic power of $(t-(u\wedge v))$ is necessary in order to balance the term $|\xi|^{6}$ in the right-hand side of \eqref{eqn.dominate.g2}. This is the reason why we have imposed the condition $t_{\ep} = t-\ep^{1/3}$. 


Summarizing our considerations so far, we have proved that  $\cg_{1}-\cg_{2}' \to 0$ in \eqref{eqn.g1g2p}. In order to see that  $\cg_{2}-\cg_{2}'$, we write  
\begin{eqnarray*}
\cg_{2}-\cg_{2}'  &=&
 \lp
 \int_{\delta_{3}}^{t_{1}} \int_{\delta_{4}}^{t_{2}}-\int_{\delta_{3}-\ep_{1}}^{t_{1}} \int_{\delta_{4}-\ep_{2}}^{t_{2}}\rp g(u_{3}',v_{3}' ) R(u_{4}',v_{4}' ) \dd u\dd v
 \\
 &=&
  \lp\int_{\delta_{3}}^{\delta_{3}-\ep_{1}} \int_{\delta_{4}}^{t_{2}}+
  \int_{\delta_{3}-\ep_{1}}^{t_{1}} \int^{\delta_{4}-\ep_{2}}_{\delta_{4}}\rp g(u_{3}',v_{3}' ) R(u_{4}',v_{4}' ) \dd u\dd v.
\end{eqnarray*}
Since $g$ and $R$ are both continuous on $[-2\ep, 2\ep]\times [0,t]$ and $ [0,t] \times [-2\tilde{\ep}, 2\tilde{\ep}]$, we  immediately have    $\cg_{2}-\cg_{2}'\to 0$ as $\ep, \tilde{\ep}\to0+$. 
In conclusion,   we have found that relation \eqref{eqn.g1g2p} holds true. Therefore, it is readily checked that $\lim_{\ep, \tilde{\ep}\to 0+} | \cg_{1} - \cg_{2} |=0 $.
The proof is complete.
\end{proof}


With Lemma \ref{lemma.g1g2} in hand, we can now estimate some of the boundary terms derived from $\ca_{0} (t_{\ep}, t_{\tilde{\ep}})$ in Lemma \ref{lemma.ue.L2}. 
\begin{lemma}\label{lemma.a00}
Let the assumptions be as in Lemma \ref{lemma.a0} and suppose that relations \eqref{hyp.R} and~\eqref{hyp.mu} hold true. Set
\begin{eqnarray*}
&&\ca_{00} (t_{\ep}, t_{\tilde{\ep}})= \int_{0}^{t_{\tilde{\ep}}}\int_{0}^{t_{\ep}}   
    \Gamma \lp\begin{matrix}
s'-\tilde{\ep}&s' \\ s-\ep &s  
\end{matrix}\rp 
R (t,t) 
      \dd ^{\ep}s\dd ^{\tilde{\ep}}s',
     \\
     &&
      \ca_{01} (t_{\ep}, t_{\tilde{\ep}}) = \int_{0}^{t_{\tilde{\ep}}}\int_{0}^{t_{\ep}}  
    \Gamma \lp\begin{matrix}
s'-\tilde{\ep}&s' \\ s-\ep &s  
\end{matrix}\rp 
R (s,t) 
      \dd ^{\ep}s\dd ^{\tilde{\ep}}s',
      \\
   &&
      \ca_{02}  (t_{\ep}, t_{\tilde{\ep}})= \int_{0}^{t_{\tilde{\ep}}}\int_{0}^{t_{\ep}}   
    \Gamma \lp\begin{matrix}
s'-\tilde{\ep}&s' \\ s-\ep &s  
\end{matrix}\rp 
R (t,s') 
      \dd ^{\ep}s\dd ^{\tilde{\ep}}s' . 
\end{eqnarray*}
 Then 
\begin{eqnarray}
\lim_{\ep, \tilde{\ep}\to 0+} (\ep\tilde{\ep})^{-1} ( \ca_{00} (t_{\ep}, t_{\tilde{\ep}}) - \ci_{01} (t_{\ep}, t_{\tilde{\ep}})) &=& R(t,t) ( \Gamma(0, 0)-\Gamma(0,t)-\Gamma(t,0) ),
\label{eqn.a00.lim}
\\
\lim_{\ep,\tilde{\ep}\to 0+} (\ep\tilde{\ep})^{-1} ( -\ca_{01} (t_{\ep}, t_{\tilde{\ep}}) - \ci_{21} (t_{\ep}, t_{\tilde{\ep}})) &=& \int_{0}^{t} R(s,t) \frac{\partial \Gamma}{\partial s}(s,0)\dd s ,
\label{eqn.a01.lim}
\\
\lim_{\ep,\tilde{\ep}\to 0+} (\ep\tilde{\ep})^{-1} ( -\ca_{02} (t_{\ep}, t_{\tilde{\ep}}) - \ci_{11}  (t_{\ep}, t_{\tilde{\ep}})) &=&   \int_{0}^{t} R(t,s') \frac{\partial \Gamma}{\partial s'}(0,s') \dd s' ,
\label{eqn.a02.lim}
\end{eqnarray}
where $\ci_{01}$, $\ci_{11}$ and $\ci_{21}$ are respectively defined by \eqref{eqn.i0}, \eqref{eqn.i1}, \eqref{eqn.i2}. 
\end{lemma}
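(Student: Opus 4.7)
The plan is to reduce each of the three limits to the same three-step template: evaluate the discretized integrals in closed form, isolate a Riemann-sum piece converging to the claimed right-hand side, and control a boundary remainder concentrated near the singular corner $(t,t)$ of $\Gamma$.

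For \eqref{eqn.a00.lim}, pulling out the constant factor $R(t,t)$ collapses the rectangular $\Gamma$-increment via full telescoping in both indices to
\[
  \frac{\ca_{00}(t_\ep,t_{\tilde\ep})}{\ep\tilde\ep}=R(t,t)\bigl[\Gamma(t_K,t_L)-\Gamma(t_K,-\tilde\ep)-\Gamma(-\ep,t_L)+\Gamma(-\ep,-\tilde\ep)\bigr],
\]
where $t_K$ (resp.\ $t_L$) denotes the largest $\ep$-grid point (resp.\ $\tilde\ep$-grid point) strictly below $t_\ep$ (resp.\ $t_{\tilde\ep}$). Because each of the intervals $[t_\ep,t_\ep+\ep)$, $[t_{\tilde\ep},t_{\tilde\ep}+\tilde\ep)$ contains exactly one grid point, direct evaluation similarly gives $\ci_{01}(t_\ep,t_{\tilde\ep})/(\ep\tilde\ep)=\Gamma(t_K,t_L)R(t_{K+1},t_{L+1})$. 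Subtracting and applying dominated convergence to the three corner terms (which stay evaluated away from the singular locus $\{s=s'=t\}$ of $\Gamma$) yields the claimed $R(t,t)[\Gamma(0,0)-\Gamma(0,t)-\Gamma(t,0)]$, provided the boundary remainder $\Gamma(t_K,t_L)[R(t,t)-R(t_{K+1},t_{L+1})]$ vanishes in the limit.

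For \eqref{eqn.a01.lim} and \eqref{eqn.a02.lim} I would telescope only in the coordinate along which the $R$-weight is constant. For $\ca_{01}$ this yields
\[
  \frac{\ca_{01}(t_\ep,t_{\tilde\ep})}{\ep\tilde\ep}=\sum_{k=0}^K R(t_k,t)\,(h_k-\tilde h_k),\quad h_k=\Gamma(t_k,t_L)-\Gamma(t_{k-1},t_L),\quad \tilde h_k=\Gamma(t_k,-\tilde\ep)-\Gamma(t_{k-1},-\tilde\ep),
\]
together with $\ci_{21}(t_\ep,t_{\tilde\ep})/(\ep\tilde\ep)=-\sum_{k=1}^K h_k R(t_k,t_{L+1})$. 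The sum $\ep^{-1}\sum_k R(t_k,t)\tilde h_k$ is a Riemann sum for $\int_0^t R(s,t)\partial_s\Gamma(s,0)\dd s$, convergent by \eqref{eqn.dk} and the continuity of $R$, while the residual $\sum_{k=1}^K h_k[R(t_k,t_{L+1})-R(t_k,t)]$ is what remains to be estimated. The argument for \eqref{eqn.a02.lim} is symmetric, with $\ca_{02}$ telescoping in $s$ and $\ci_{11}$ playing the role of $\ci_{21}$.

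The boundary remainders are controlled by three ingredients: the sharp exponential-decay bound $\Gamma(t_K,t_L)\le Cr^{-\beta}$ with $r\asymp\ep^{1/3}+\tilde\ep^{1/3}$, obtained as in \eqref{eqn.cj1.estimate} from \eqref{hyp.mu}; Abel summation, which in cases (b) and (c) converts the discrete $\Gamma$-differences $h_k$ into pairings of $\Gamma$ with rectangular $R$-increments $\be[(X_{t_{k+1}}-X_{t_k})(X_t-X_{t_{L+1}})]$, controlled via Cauchy--Schwarz and the variance estimate implicit in \eqref{hyp.R}; and the cubic-root cutoff $t_\ep=t-\sqrt[3]{\ep}$, providing exactly the buffer matching the singularity of $\Gamma$. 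The main obstacle lies here: Lemma \ref{lemma.R.incr} alone gives only $\beta/2$-Hölder control of $R$, insufficient to compensate the $r^{-\beta}$ blow-up of $\Gamma$; one must exploit the stronger rectangular form of \eqref{hyp.R} with the sharp exponent $\beta$, together with the precise $\sqrt[3]{\ep}$ scaling---exactly the mechanism introduced to make Lemma \ref{lemma.g1g2} work.
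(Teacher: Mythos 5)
Your algebraic skeleton is correct and is essentially the paper's own starting point: the double telescoping of the rectangular $\Gamma$-increments and the one-point evaluation $\ci_{01}(t_\ep,t_{\tilde\ep})/(\ep\tilde\ep)=\Gamma(t_K,t_L)\,R(t_{K+1},t_{L+1})$ reproduce exactly the paper's identities \eqref{a1} and \eqref{eqn.i01}, and the three non-corner terms $-\Gamma(t_K,-\tilde\ep)-\Gamma(-\ep,t_L)+\Gamma(-\ep,-\tilde\ep)$ do converge to the claimed limit by continuity of $\Gamma$ away from $(t,t)$. So, as you say, everything reduces to showing that the corner remainder $\Gamma(t_K,t_L)\bigl[R(t,t)-R(t_{K+1},t_{L+1})\bigr]$ vanishes. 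This is where the gap is: none of your three ingredients closes it. The bound $\Gamma(t_K,t_L)\leq K r^{-\beta}$ with $r\asymp\ep^{1/3}+\tilde\ep^{1/3}$ is correct and can essentially be saturated under \eqref{hyp.mu}. But $R(t,t)-R(t_{K+1},t_{L+1})=\be[(X_t-X_{t_{K+1}})X_t]+\be[X_{t_{K+1}}(X_t-X_{t_{L+1}})]$ is a sum of \emph{one-sided} covariances, not rectangular increments, so the "stronger rectangular form of \eqref{hyp.R} with sharp exponent $\beta$" you appeal to simply does not apply to it; Cauchy--Schwarz together with Lemma \ref{lemma.R.incr} gives only $O(r^{\beta/2})$, and in \eqref{eqn.a00.lim} there is no index left to Abel-sum over. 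The net bound is $r^{-\beta}\cdot r^{\beta/2}=r^{-\beta/2}$, which diverges.

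The paper's proof turns in a genuinely different direction at precisely this point, and that turn is the whole content of the lemma. Instead of collapsing $\ci_{01}$ to the corner value, it re-expands $\Gamma(\bar t^{\ep},\bar t^{\tilde\ep})$ as the double integral $\int_0^{\bar t^{\tilde\ep}}\int_0^{\bar t^{\ep}}g(s,s')\,\dd s\,\dd s'$ of $g=\partial^2\Gamma/\partial s\partial s'$ plus the three convergent corner terms, i.e.\ it writes $\ci_{01}=\ca_{00}'-\ep\tilde\ep\,R(\cdot,\cdot)(\Gamma(0,0)-\Gamma(0,\bar t^{\tilde\ep})-\Gamma(\bar t^{\ep},0))$, and then compares the two double integrals $\ca_{00}$ and $\ca_{00}'$ (same kernel $g$, two different constant $R$-weights) through Lemma \ref{lemma.g1g2}. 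There the $R$-discrepancy is dominated \emph{pointwise on the square} by $(t-(u\wedge v))^{3\beta/2}$ --- this is exactly what the cube-root cutoff $t_\ep=t-\sqrt[3]{\ep}$ buys --- and $(t-(u\wedge v))^{3\beta/2}\,g(u,v)$ is integrable under \eqref{hyp.mu} because $\int_{|\xi|>1}|\xi|^{-3\beta}\mu(\dd\xi)<\infty$. In other words, the divergence you are fighting at the corner is never isolated as a single product; it is smeared over the whole square where the extra power $3\beta/2$ (rather than your $\beta/2$) is available. To repair your argument you must keep the remainder in this double-integral form and prove (or invoke) Lemma \ref{lemma.g1g2}; the fully telescoped corner estimate cannot be rescued from \eqref{hyp.R} and \eqref{hyp.mu} alone. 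The same defect reappears in the endpoint terms of your Abel summations for \eqref{eqn.a01.lim} and \eqref{eqn.a02.lim}.
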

\begin{remark}
We highlight the fact that, due to the singularity of our equation, both terms $\ca_{00} (t_{\ep}, t_{\tilde{\ep}})$ and $\ci_{01} (t_{\ep}, t_{\tilde{\ep}})$ in \eqref{eqn.a00.lim} are divergent. However, the difference $ \ca_{00} -\ci_{01} $ is a convergent quantity. The same holds true for the limits in \eqref{eqn.a01.lim} and \eqref{eqn.a02.lim}. 
\end{remark}
\begin{proof}[Proof of Lemma \ref{lemma.a00}] 
The proof is an application of  Lemma \ref{lemma.g1g2}.
We only consider the convergence \eqref{eqn.a00.lim}. The convergence in \eqref{eqn.a01.lim} and \eqref{eqn.a02.lim} can be shown in a similar way and will be omitted.
In the following we denote $g=\frac{\partial^{2}\Gamma}{\partial s\partial s'}$. 

For $s\in [0,t]$, set $\eta^{\ep} (s) =  t_{k} $ if $ k \ep\leq s<  (k+1) \ep  $, where we recall that $t_{k} = k\ep$. Then it is easily seen that 
\begin{eqnarray*}
\ca_{00}  (t_{\ep}, t_{\tilde{\ep}})&=& \int_{0}^{t_{\tilde{\ep}}}\int_{0}^{t_{\ep}}  \Gamma \lp\begin{matrix}
\eta^{\tilde{\ep}}( s') -\tilde{\ep}&\eta^{\tilde{\ep}}( s')   \\ \eta^{ {\ep}}( s ) -\ep &  \eta^{ {\ep}}( s )   
\end{matrix}\rp 
  R(t,t)\dd s\dd s' .
\end{eqnarray*}
Then resorting to the mean value theorem for the rectangular increment of $\Gamma$, we get the existence of $\theta =\theta(s,s')$ and $\theta' = \theta'(s,s')$ such that $\theta, \theta' \in [0,1]$ and  
\begin{eqnarray*}
\ca_{00}  (t_{\ep}, t_{\tilde{\ep}})&=& \ep\tilde{\ep}\int_{0}^{t_{\tilde{\ep}}}\int_{0}^{t_{\ep}}  g(\eta(s)-\ep\theta,\eta^{\tilde{\ep}}(s')-\tilde{\ep}\theta') R(t , t )\dd s\dd s' , 
\end{eqnarray*}
where we recall that we have set $g = \frac{\partial^{2} \Gamma}{\partial s\partial s'}$. 
On the other hand, owing to the fact that $\eta(t_{\ep})+\ep$ is the only partition point in $[\tilde{t}, \tilde{t}+\ep)$, we have: 
\begin{eqnarray}\label{a1}
\ci_{01} (t_{\ep}, t_{\tilde{\ep}}) = \ep\tilde{\ep}\Gamma(\eta(t_{\ep}) , \eta(t_{\tilde{\ep}})  ) R(\eta(t_{\ep})+\ep , \eta(t_{\tilde{\ep}})+\tilde{\ep} ) .
\end{eqnarray}
In order to ease notations we denote $\bar{t}^{\ep} = \eta(t_{\ep})$ and $\bar{t}^{\tilde{\ep}} = \eta(t_{\tilde{\ep}})$, so that we can recast \eqref{a1} as
\begin{equation}\label{a2}
\ci_{01} (t_{\ep}, t_{\tilde{\ep}}) = \ep\tilde{\ep} \, \Gamma(\bar{t}^{\ep} , \bar{t}^{\tilde{\ep}}  ) \,
R(\bar{t}^{\ep}+\ep , \bar{t}^{\tilde{\ep}}+\tilde{\ep} ) .
\end{equation}
Introducing the rectangular increment of $\Gamma$ on $[0,\bar{t}^{\ep}]\times[0,\bar{t}^{\tilde{\ep}}]$, we now write \eqref{a2} under the form: 
\begin{eqnarray}\label{eqn.i01}
   \ci_{01} &=&
  \ca_{00}'  (t_{\ep}, t_{\tilde{\ep}}) 
  - \ep\tilde{\ep} \,  R(\bar{t}^{\ep}+\ep, \bar{t}^{\tilde{\ep}}+\tilde{\ep} ) 
  ( \Gamma(0, 0) - \Gamma(0, \bar{t}^{\tilde{\ep}} ) - \Gamma (\bar{t}^{ {\ep}} , 0) ) ,
\end{eqnarray}
where we have set 
\begin{equation*}
\ca_{00}'  (t_{\ep}, t_{\tilde{\ep}})
 = \ep\tilde{\ep} \,  R(\bar{t}^{\ep}+\ep, \bar{t}^{\tilde{\ep}}+\tilde{\ep} ) \Gamma \lp\begin{matrix}
0&\bar{t}^{\tilde{\ep}}  \\ 0&\bar{t}^{ {\ep}} 
\end{matrix}\rp.
\end{equation*}
Furthermore, differentiating the function $\Gamma$ above, $\ca_{00}'  (t_{\ep}, t_{\tilde{\ep}})$ can be expressed as
\begin{eqnarray*}
 \ca_{00}'  (t_{\ep}, t_{\tilde{\ep}})
&=& \ep\tilde{\ep} \, \int_{0}^{\bar{t}^{\tilde{\ep}} } \int_{0}^{\bar{t}^{ {\ep}} } \frac{\partial^{2}\Gamma}{\partial s\partial s'}(s,s') R(\bar{t}^{\ep}+\ep, \bar{t}^{\tilde{\ep}}+\tilde{\ep} ) \dd s\dd s'
\\
&=& \ep\tilde{\ep} \, 
\int_{0}^{\bar{t}^{\tilde{\ep}} } \int_{0}^{\bar{t}^{ {\ep}} } g(s,s') R(\bar{t}^{\ep}+\ep, \bar{t}^{\tilde{\ep}}+\tilde{\ep} ) \dd s\dd s'.
\end{eqnarray*}
A direct application of  Lemma \ref{lemma.g1g2} now yields that  
\begin{eqnarray}\label{eqn.a00a00p}
\lim_{\ep,\tilde{\ep}\to 0+} 
\frac{1}{\ep\tilde{\ep}} \, (\ca_{00}  (t_{\ep}, t_{\tilde{\ep}})- \ca_{00}'  (t_{\ep}, t_{\tilde{\ep}})) = 0.   
\end{eqnarray}
In addition, some easy continuity arguments show that  
\begin{eqnarray*} 
\lim_{\ep,\tilde{\ep}\to 0+} 
R(\bar{t}^{ {\ep}} , \bar{t}^{\tilde{\ep}} ) ( \Gamma(0, 0) - \Gamma(0, \bar{t}^{\tilde{\ep}} ) - \Gamma (\bar{t}^{ {\ep}} , 0) ) 
=
 R(t,t) ( \Gamma(0, 0)-\Gamma(0,t)-\Gamma(t,0) ) .
\end{eqnarray*}
Combining this relation with  \eqref{eqn.i01} and  \eqref{eqn.a00a00p}  ends the proof of \eqref{eqn.a00.lim}.  
\end{proof}



Following is the main result of the paper:
\begin{theorem}\label{thm1}
Let $W$ be the Gaussian field defined in Definition \ref{def.gau} with  time covariance function $R$   and spectral measure $\mu$. The following results on the solution to equation \eqref{eqn.anderson} hold true.

\smallskip
\noindent\underline{Sufficiency}: If $\mu$ satisfies relation   \eqref{hyp.mu} and $R$ is such that 
\begin{eqnarray}\label{eqn.Rupper}
|R(t, t) - R(t, v)-R(u,t)+R(u,v)| \leq K (t-u\wedge v)^{\beta}  
\end{eqnarray}
for a constant $K>0$ and $\beta\in (0, 2]$,  then 

\noindent\emph{(i)} The solution $u(t,x)$ of \eqref{eqn.anderson} exists in the sense of \eqref{eqn.solution} and Definition \ref{def.int}. Namely, for all $t\in \R_{+}$ and $x\in \R^{d}$ the $L^{2}(\Omega)$-convergence in Definition \ref{def.int} holds for $g(s,y)=p_{t-s}(x-y)$. 

\noindent\emph{(ii)} The following identity for the $L^{2}(\Omega)$-norm of the solution holds: 
\begin{eqnarray}\label{eqn.u.L2}
\be[|u(t,x)|^{2}] &=& 
R 
\lp\begin{matrix}
0&t \\ 0&t
\end{matrix}\rp \Gamma(0, 0)
 +\int_{0}^{t }  R 
\lp\begin{matrix}
0&t \\ s'&t
\end{matrix}\rp \frac{\partial \Gamma}{\partial s'}(0, s')\dd s'
 +\int_{0}^{t} R 
\lp\begin{matrix}
s&t \\ 0&t
\end{matrix}\rp \frac{\partial \Gamma}{\partial s}(s, 0)\dd s
\nonumber
\\
&&
+\int_{0}^{t }\int_{0}^{t} R 
\lp\begin{matrix}
s&t \\ s'&t
\end{matrix}\rp \frac{\partial^{2} \Gamma}{\partial s\partial s'}(s, s')\dd s\dd s' .
\end{eqnarray}

\smallskip
 \noindent  \underline{Necessity}: Suppose that the solution of \eqref{eqn.anderson} exists in the sense of \eqref{eqn.solution}, that   identity \eqref{eqn.u.L2} holds true, and that $R$ is such that 
  \begin{eqnarray}\label{eqn.Rlower}
|R(t, t) - R(t, v)-R(u,t)+R(u,v)| \geq K (t-u\wedge v)^{\beta}  
\end{eqnarray}
for some $\beta\in (0, 2]$. 
 Then  the spectral measure $\mu$ satisfies relation   \eqref{hyp.mu}. 
\end{theorem}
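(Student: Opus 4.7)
\emph{Sufficiency.} I will prove (i) and (ii) at once by showing that, for every fixed $(t,x)$, the covariance $\be[u_{\ep}(t,x)\,u_{\tilde{\ep}}(t,x)]$ converges to a finite limit $V(t,x)$ as $\ep,\tilde{\ep}\to 0+$, where the limit is exactly the right-hand side of \eqref{eqn.u.L2}. Granted this convergence, the polarization identity
\begin{equation*}
\be\bigl[(u_{\ep}-u_{\tilde{\ep}})^{2}\bigr]=\be[u_{\ep}^{2}]+\be[u_{\tilde{\ep}}^{2}]-2\be[u_{\ep}u_{\tilde{\ep}}]\longrightarrow V+V-2V=0
\end{equation*}
shows that $\{u_{\ep}(t,x)\}$ is Cauchy in $L^{2}(\Omega)$, which by Definition~\ref{def.int} gives existence of the stochastic convolution \eqref{eqn.solution} and identifies $\be[u(t,x)^{2}]=V(t,x)$.

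\emph{Assembling the covariance.} Starting from Lemma~\ref{lemma.ue.L2}(i), the idea is to extract from the main term $\ca_{0}(t_{\ep},t_{\tilde{\ep}})$ a rectangular increment of $R$ amenable to Lemma~\ref{lemma.a0}. To this end I use the algebraic identity
\begin{equation*}
R(s,s')=R\lp\begin{matrix} s & t \\ s' & t \end{matrix}\rp-R(t,t)+R(s,t)+R(t,s'),
\end{equation*}
(with $R$ symmetric), which splits $\ca_{0}(t_{\ep},t_{\tilde{\ep}})=\tilde{\ca}_{0}^{\ep,\tilde{\ep}}-\ca_{00}(t_{\ep},t_{\tilde{\ep}})+\ca_{01}(t_{\ep},t_{\tilde{\ep}})+\ca_{02}(t_{\ep},t_{\tilde{\ep}})$ in the notation of Lemmas~\ref{lemma.a0}--\ref{lemma.a00}. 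Inserting this into \eqref{eqn.ue.l2} and regrouping so that each divergent boundary piece from the $\ca_{0j}$ pairs with the matching $\ci_{k1}$, the bracket in \eqref{eqn.ue.l2} becomes
\begin{equation*}
\tilde{\ca}_{0}^{\ep,\tilde{\ep}}+(\ci_{01}-\ca_{00})+(\ca_{02}+\ci_{11})+(\ca_{01}+\ci_{21})+\ci_{3}+\ci_{4}-\ci_{00}-\ci_{10}-\ci_{20}.
\end{equation*}
Lemma~\ref{lemma.a0} handles $\tilde{\ca}_{0}^{\ep,\tilde{\ep}}$ and supplies the double-integral term in \eqref{eqn.u.L2}. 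Lemma~\ref{lemma.a00} handles the three grouped pairs and produces the appropriate combinations of $R(t,t)$, $R(s,t)$, $R(t,s')$ against $\Gamma$ and its first derivatives. Lemma~\ref{lemma.i3i4} takes care of $\ci_{3}$ and $\ci_{4}$. Finally the three purely pointwise terms $\ci_{i0}$ are explicit products of one evaluation of $\Gamma$ and one of $R$ (for instance $\ci_{00}/(\ep\tilde{\ep})\to\Gamma(0,0)R(0,0)$), whose limits follow by continuity. Adding up all the limits and using the definition \eqref{eqn.cov.t} of the rectangular increment, the pointwise contributions assemble into $R\binom{0,t}{0,t}\Gamma(0,0)$ and the single-variable limits assemble into the two line integrals of \eqref{eqn.u.L2}, completing the identification of $V(t,x)$ with the right-hand side of \eqref{eqn.u.L2}.

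\emph{Necessity.} Suppose the solution exists and \eqref{eqn.u.L2} is valid, so every term on its right-hand side is finite. In particular the double integral $\cj$ of \eqref{eqn.ca.lim} is a convergent real number. Now $R\binom{s,t}{s,t}=\be[(X_{t}-X_{s})^{2}]\geq 0$, and by continuity of $R$ together with the lower bound \eqref{eqn.Rlower} the rectangular increment $R\binom{s,t}{s',t}$ cannot vanish on $\{(s,s'):s,s'<t\}$; consequently it keeps the sign of its diagonal values, and in fact
\begin{equation*}
R\lp\begin{matrix} s & t\\ s' & t \end{matrix}\rp\geq K(t-s\wedge s')^{\beta}
\end{equation*}
on $[0,t]^{2}$. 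Using $\frac{\partial^{2}\Gamma}{\partial s\partial s'}\geq 0$ this yields
\begin{equation*}
\int_{0}^{t}\!\!\int_{0}^{t}(t-s\wedge s')^{\beta}\frac{\partial^{2}\Gamma}{\partial s\partial s'}(s,s')\,\dd s\,\dd s'<\infty,
\end{equation*}
and plugging in \eqref{eqn.ddk} and applying Fubini gives
\begin{equation*}
\int_{\R^{d}}\frac{|\xi|^{4}}{4}\lp\int_{0}^{t}\!\!\int_{0}^{t}(t-s\wedge s')^{\beta}e^{-(2t-s-s')|\xi|^{2}/2}\,\dd s\,\dd s'\rp\mu(\dd\xi)<\infty.
\end{equation*}
The inner double integral is evaluated by the same change of variables used for $\cj_{1}$ in Lemma~\ref{lemma.a0}: it is $\asymp|\xi|^{-2\beta-4}$ for $|\xi|\geq 1$ and $O(1)$ for $|\xi|\leq 1$. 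Combined with the finiteness of $\Gamma(0,0)=\int e^{-t|\xi|^{2}}\mu(\dd\xi)$ forced by the first term of \eqref{eqn.u.L2}, which yields $\mu(\{|\xi|\leq 1\})<\infty$, this is precisely \eqref{hyp.mu}.

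\emph{Main obstacle.} The delicate point is the sufficiency bookkeeping: the pieces $\ca_{0},\ci_{0},\ci_{1},\ci_{2}$ individually diverge at rate $(\ep\tilde{\ep})^{-1}$ when paired with pointwise values, and only the specific pairings above cancel these singularities. The very choice $t_{\ep}=t-\ep^{1/3}$ (versus the naive $t_{\ep}=t-\ep$) is dictated by the $|\xi|^{6}$ factor coming from the third-order derivatives of $\Gamma$ in \eqref{eqn.dddk}, which appears implicitly in Lemma~\ref{lemma.g1g2} through the dominating bound \eqref{eqn.dominate.g2}, and is exactly what ultimately permits applying dominated convergence under condition \eqref{hyp.mu}.
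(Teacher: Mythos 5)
Your proposal is correct and follows essentially the same route as the paper: the same splitting $\ca_{0}=\tilde{\ca}_{0}^{\ep,\tilde{\ep}}-\ca_{00}+\ca_{01}+\ca_{02}$ (which the paper's displayed version actually mistypes as $-\ca_{00}+\ca_{00}+\ca_{00}$), the same pairings of divergent boundary terms handled by Lemmas \ref{lemma.a0}, \ref{lemma.a00} and \ref{lemma.i3i4}, and the same Fubini argument for necessity. Your added details --- the polarization/Cauchy step, the sign argument showing the rectangular increment is genuinely positive rather than merely large in absolute value, and extracting $\mu(\{|\xi|\leq 1\})<\infty$ from the finiteness of $\Gamma(0,0)$ --- are all points the paper leaves implicit, and they are handled correctly.
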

\begin{proof}  
Recall that in Lemma \ref{lemma.ue.L2} we have shown that 
\begin{eqnarray*}
&&\be(u_{\ep}(t, x)u_{\tilde{\ep}}(t, x) ) 
\\
& &\quad\quad=
(\ep\tilde{\ep})^{-1} \lp
 \ca_{0}(t_{\ep}, t_{\tilde{\ep}}) +\ci_{0}(t_{\ep}, t_{\tilde{\ep}})+\ci_{1}(t_{\ep}, t_{\tilde{\ep}})+\ci_{2}(t_{\ep}, t_{\tilde{\ep}})+\ci_{3}(t_{\ep}, t_{\tilde{\ep}})+\ci_{4}(t_{\ep}, t_{\tilde{\ep}}) 
 \rp.
\end{eqnarray*}
Moreover, going back to the definition  \eqref{eqn.a0} of $\ca_{0}$ and \eqref{eqn.a0.tilde} of $\tilde{\ca}_{0}^{\ep, \tilde{\ep}}$, some elementary manipulations show that
\begin{eqnarray*}
\ca_{0} (t_{\ep}, t_{\tilde{\ep}}) &=& 
\tilde{\ca}_{0}^{\ep, \tilde{\ep}}  (t_{\ep}, t_{\tilde{\ep}}) - \ca_{00} (t_{\ep}, t_{\tilde{\ep}}) + \ca_{00} (t_{\ep}, t_{\tilde{\ep}}) + \ca_{00} (t_{\ep}, t_{\tilde{\ep}}),
\end{eqnarray*}
 where $\ca_{00}$, $\ca_{01}$ and $\ca_{02}$ are introduced in Lemma \ref{lemma.a00}. Thanks to the decompositions \eqref{eqn.i0}, \eqref{eqn.i1} and \eqref{eqn.i2}, we thus end up with 
\begin{align*}
&
\be(u_{\ep}(t, x)u_{\tilde{\ep}}(t, x)) 
\\
&\quad= 
(\ep\tilde{\ep})^{-1}
\Big(
\tilde{\ca}_{0}^{\ep, \tilde{\ep}} (t_{\ep}, t_{\tilde{\ep}})  +\ci_{3}(t_{\ep}, t_{\tilde{\ep}})+\ci_{4}(t_{\ep}, t_{\tilde{\ep}})-\ci_{00}(t_{\ep}, t_{\tilde{\ep}})-\ci_{10}(t_{\ep}, t_{\tilde{\ep}})-\ci_{20}(t_{\ep}, t_{\tilde{\ep}})  
  \\
  &
  \quad\quad\quad
   + (\ci_{11}(t_{\ep}, t_{\tilde{\ep}}) +\ca_{02}(t_{\ep}, t_{\tilde{\ep}}))+ (\ci_{21}(t_{\ep}, t_{\tilde{\ep}}) +\ca_{01}(t_{\ep}, t_{\tilde{\ep}}))
  +(  \ci_{01}(t_{\ep}, t_{\tilde{\ep}}) -\ca_{00}(t_{\ep}, t_{\tilde{\ep}}) ) 
  \Big).
\end{align*}
Now we can apply  Lemma \ref{lemma.i3i4} to $\ci_{3}$ and $\ci_{4}$, Lemma \ref{lemma.a0} to $\tilde{\ca}_{0}^{\ep, \tilde{\ep}}$, Lemma \ref{lemma.a00} to $ (\ci_{11}  +\ca_{02} )+ (\ci_{21}  +\ca_{01} )
  +(  \ci_{01}  -\ca_{00}  )$, and apply the continuity of $R$ and $\Gamma$ to $\ci_{00}-\ci_{10}-\ci_{20}$, which shows the convergence of $\be(u_{\ep}(t, x)u_{\tilde{\ep}}(t, x) ) $ as $\ep, \tilde{\ep}\to 0+$ and thus    completes the proof of (i). Item (ii) can be shown immediately by rearranging the limits of these convergences. Notice that in all the aforementioned Lemma \ref{lemma.i3i4},  \ref{lemma.a0} and \ref{lemma.a00} we  are only using the upper bound \eqref{eqn.Rupper} on the increments of $R$ instead of relation \eqref{hyp.R}. 
  
  To prove   the necessity, we apply  Fubini's theorem to the last term on the right-hand side of \eqref{eqn.u.L2} and then invoke relation \eqref{eqn.ddk}. This yields  the inequality
\begin{eqnarray*}
  \int_{\R^{d}} {|\xi|^{4}} \mu(\dd \xi) \int_{0}^{t}\int_{0}^{t}     e^{-\frac{(2t-s-s')|\xi|^{2}}{2}}  R \lp\begin{matrix}
s&t \\ s' &t  
\end{matrix}\rp
    \dd s\dd s' <\infty. 
\end{eqnarray*} 
Taking into account that $R$ satisfies relation \eqref{eqn.Rlower} and using some elementary change of variable formulas, we conclude that the spatial spectral measure $\mu$ satisfies relation  \eqref{hyp.mu}. 
  \end{proof}

\subsection{Examples of application}
Let us now give some  examples of covariance functions satisfying our Hypothesis \eqref{hyp.R} and \eqref{hyp.mu}. We will also compare our Theorem \ref{thm1} with the numerous results available for equation \eqref{eqn.anderson} and  for the parabolic Anderson model. Recall that the parabolic Anderson model is given by:
\begin{equation} \label{eqn.multiplicative_anderson}
\begin{cases}
&\partial_{t} u    = \frac12 \Delta u   + u\dw , \qquad t\in\ott,\, x\in\R^{d} \\
&u(0, x)   =u_{0}(x),
\end{cases}
\end{equation}
where $u_{0}$ is a given smooth and non degenerate initial condition. In \eqref{eqn.multiplicative_anderson},
the product between $u$ and $\dw $ is understood in the  Wick sense. 

\begin{example} 
Assume that the noise is  white in time, namely,  
$R(s,t)=s\wedge t$. In this case  $\beta=1$ and  our condition \eqref{hyp.mu} recovers Dalang's condition (see \cite{Da}): 
$$
\int_{\R^{d}} \frac{1}{1+|\xi|^{2}}\mu(d\xi)<\infty.
$$ 
In particular, if    the spatial covariance is given by a Riesz kernel
$\laa(x)=|x|^{-\eta}$ for some $\eta>0$:
\[
\be(\dot W(t,x)\dot W(s,y))=\delta(t-s)\Lambda(x-y)\,,\]
then
$\mu(d\xi)=c_{\eta,d}|\xi|^{-(d-\eta)} \, d\xi$. 
Condition \eqref{hyp.mu} is thus equivalent to  $\eta<2$, as in \cite{Da}.  

Still in the Brownian case in time, suppose that $d=1$ and that the spatial 
noise is   fractional   with Hurst parameter $H\in (0, 1)$,  which is equivalent to   
$\mu(d\xi)=C_H|\xi|^{1-2H}$. The condition~\eqref{hyp.mu}  is thus satisfied 
for all $H\in (0, 1)$.  
This is in sharp contrast with the multiplicative case of equation \eqref{eqn.multiplicative_anderson}. Indeed, in the multiplicative case with $d=1$, one has to assume $H>1/4$ in order to get existence and uniqueness of a function valued solution (see e.g. \cite{HHLNT}).

Consider now a Brownian motion in time whose spatial  covariance is  given by  the  Bessel kernel 
\[
\laa(x)=\int_0^{\infty}w^{\frac{\eta-d}{2}}e^{-w}e^{-\frac{|x|^2}{4w}}dw\,.
\]
Then we have
   $\mu(d\xi)=c_{\eta,d} (1+|\xi|^2)^{-\frac{\eta}{2}} d\xi$ and \eqref{hyp.mu}  is satisfied  if and only if  $\eta> d-2$. This result is implicitly contained in \cite{Da}.
\end{example}

\begin{example}   
Assume that the noise is fractional in time with Hurst parameter $H_0\in (0, 1)$,  namely, 
 $R(s,t)=  \frac{1}{2}\lp |s|^{2H_0}+|t|^{2H_0}-|s-t|^{2H_0}\rp$.
In this situation $\beta=2H_0$ in \eqref{hyp.R} and \eqref{hyp.mu}. 

The particular case    $H_{0}>1/2$     has been considered in \cite{BT}    for equation~\eqref{eqn.anderson}, where some necessary and sufficient conditions on the covariance function in space have been obtained for the existence of the solution.   

Suppose now that  the noise is also fractional in space with Hurst parameters $(H_1, 
\cdots, H_d)$,  which means that  the spatial  fractional covariance is given on $\R^{2d}$ by 
\begin{equation*}
\prod_{i=1}^{d} R_{H_{i}}(x_{i}, y_{i}),
\quad\text{where}\quad
R_{H_{i}}(u,v)= \frac{1}{2}    (|u|^{2H_i }+|v|^{2H_i }-|u-v|^{2H_{i}}).
\end{equation*}
Then we have $\mu(d\xi)=C_H
 \prod_{i=1}^d |\xi_i|^{1-2H_i}d\xi$, where $C_H$ is a constant
depending on the parameters $H_i$. In this situation the condition  \eqref{hyp.mu} becomes 
\[
\int_{\R^d} \frac1{1+|\xi|^{4H_0}}\prod_{i=1}^d |\xi_i|^{1-2H_i}d\xi<\infty\, ,
\]
and an easy calculation shows
that this is equivalent to  
\begin{equation} 
2H_0+\sum_{i=1}^d H_i > d \,.\label{e.3.53}
\end{equation}
This condition has to be compared to what is obtained for multiplicative equations like~\eqref{eqn.multiplicative_anderson}. In this context, \cite[Example 2.6]{HHNT} asserts the existence of an  $L^2$ solution $u$ under the condition $H_i>1/2$ for all $i=0,1,\dots, d$ and  the additional lower bound $\sum_{i=1}^d H_i>d-1$, which  is a stronger assumption than \eqref{e.3.53}. In a recent paper by Chen \cite{C2}, this condition has been improved to rough  cases with some of the $H_{1},\dots, H_{d}$ less than $1/2$. For example,  when $d=1$ it has been shown that  \eqref{eqn.multiplicative_anderson} admits a unique solution as soon as $H_{0}>1/2$ and $H_{0}+H_{1}>3/4$.

 Let us particularize condition  \eqref{e.3.53} to a white noise in space, that is $H_1=\cdots=H_d=1/2$. In this case, equation \eqref{e.3.53} becomes $H_0>d/4$.  We can compare this result to two situations studied in \cite{HN}:
 \begin{itemize}
\item 
When $d=1$, our condition reads as $H_0>1/4$, while \cite{HN} was assuming $H_0>1/2$ in the multiplicative case \eqref{eqn.multiplicative_anderson}.  

\item
When $d=2$, equation \eqref{e.3.53} becomes $H_0>1/2$, while only the existence for small time for \eqref{eqn.multiplicative_anderson} was established in \cite{HN} under the condition $H_0>1/2$ and $H_1=H_2=1/2$.  
\end{itemize}

Let us  also    mention a recent result  in \cite{Deya}, which considered a Stratonovich-type  nonlinear heat equation  with fractional noise in time and space and with $d=1$ in space. It has been shown in \cite{Deya} that in this case the   solution   exists when $2H_{0}+H_{1}>2$, while a renormalization of the system is required to solve the equation when $2\geq 2H_{0}+H_{1}>5/3$. 
\end{example}

\begin{example}
  Assume that the noise is independent of the time parameter $t$. In this case, $R(s,t)=st$ which means that  $\beta=2$.  The condition  \eqref{hyp.mu} becomes 
\begin{equation}\label{eq:cdt-space-noise}
\int_{\R^{d}} \frac{1}{1+|\xi|^{4}}  \mu (\dd \xi)<\infty \,. 
\end{equation}
Condition \eqref{eq:cdt-space-noise} can  be compared again to the multiplicative case \eqref{eqn.multiplicative_anderson}. Namely, we can quote \cite[Theorem 3.9]{HHNT}, where Dalang's condition $\int_{\R^{d}} \frac{1}{1+|\xi|^{2}}\mu(d\xi)<\infty$  had to be assumed in order to solve  equation \eqref{eqn.multiplicative_anderson}.
\end{example}

 \section{Solving the heat equation in a   Besov space}\label{section.solution2}

This section sheds a different light on the existence problem for equation \eqref{eqn.anderson}. Namely, we will now consider the noise $ \partial W :=  \frac{\partial^{d} W}{\partial x_{1} \cdots \partial x_{d} }     $ as a distribution in a certain Soblev space of negative order. Then we will quantify how the heat flow regularizes $W$ in order to give a meaning to $u$ as a function. This analysis obviously requires some preliminary  background about Littlewood-Paley theory (recalled below) and yields  some slightly non optimal results. However, let us mention that the computations leading to the existence  of a solution are simpler within this framework than in the previous section. Furthermore, the Besov space method also brings some regularity results for the solution $u$ at no additional cost.

We briefly recall some elements of the Besov space theory. The readers are referred to  \cite[Chapter 2]{BCD} for further details, and to \cite{MW} for an analysis of Besov spaces with weights.  We  first give a  result which provides us with the dyadic partition of unity (see a more complete statement in \cite[Proposition 2.10]{BCD}): 
\begin{proposition}\label{prop.little}
Let $\cac$ be the annulus $\{ \xi\in \R^{d}: 3/4 \leq |\xi|\leq 8/3 \}$. Then there exist radial functions $\chi$ and $\varphi$, valued in the interval $[0,1]$, belonging respectively to $\cd (B(0, 4/3))$ and $D(\cac)$, and such that
\begin{eqnarray*}
\forall \xi \in \R^{d}, \quad \chi(\xi) + \sum_{j\geq 0} \varphi(2^{-j} \xi) = 1,
\quad \text{and}\quad
\forall \xi \in \R^{d} \setminus \{0\} , \quad   \sum_{j \in \Z} \varphi(2^{-j} \xi) = 1,
\end{eqnarray*}
The following support type conditions are also satisfied by $\vp$ and $\chi$:
\begin{eqnarray*}
 |j-j'|\geq 2 \Rightarrow \emph{Supp} \varphi (2^{-j} \cdot) \cap \emph{Supp} \varphi (2^{-j'} \cdot) =\emptyset,
\\
j\geq 1 \Rightarrow \emph{Supp} \chi \cap \emph{Supp} \varphi (2^{-j} \cdot) =\emptyset.
\end{eqnarray*}
 \end{proposition}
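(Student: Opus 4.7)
The plan is to construct the pair $(\chi,\varphi)$ explicitly by a single telescoping device, and then read off all required properties from the construction.

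First I would build $\chi$. Using the standard mollifier/bump construction, one can exhibit a smooth even function $\chi_{0}:\R\to[0,1]$ with $\chi_{0}(r)=1$ for $|r|\le 3/4$ and $\chi_{0}(r)=0$ for $|r|\ge 4/3$. Setting $\chi(\xi):=\chi_{0}(|\xi|)$ produces a radial element of $\cd(B(0,4/3))$ taking values in $[0,1]$ and equal to $1$ on $B(0,3/4)$. Then I would \emph{define}
\begin{equation*}
\varphi(\xi) := \chi(\xi/2)-\chi(\xi).
\end{equation*}
Since $\chi(\cdot/2)$ is supported in $\{|\xi|\le 8/3\}$ and equals $1$ on $\{|\xi|\le 3/2\}$, while $\chi$ is supported in $\{|\xi|\le 4/3\}$ and equals $1$ on $\{|\xi|\le 3/4\}$, the two terms cancel on $\{|\xi|\le 3/4\}$ and are both zero on $\{|\xi|\ge 8/3\}$. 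Hence $\mathrm{supp}(\varphi)\subset\cac$ and $\varphi\in\cd(\cac)$ is radial with values in $[0,1]$ (the latter requires noting $\chi_0$ is nonincreasing in $|\xi|$, so $\chi(\xi/2)\ge\chi(\xi)$).

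Next I would prove the two partition-of-unity identities by telescoping. For any $N\ge 0$,
\begin{equation*}
\chi(\xi)+\sum_{j=0}^{N}\varphi(2^{-j}\xi) = \chi(\xi)+\sum_{j=0}^{N}\bigl[\chi(2^{-j-1}\xi)-\chi(2^{-j}\xi)\bigr]=\chi(2^{-N-1}\xi),
\end{equation*}
and letting $N\to\infty$ gives $\chi(0)=1$ by continuity, proving the first identity. For the second, the same telescoping on $\sum_{j=-M}^{N}\varphi(2^{-j}\xi)$ yields $\chi(2^{-N-1}\xi)-\chi(2^{M}\xi)$; for fixed $\xi\ne 0$ the first term tends to $1$ as $N\to\infty$ and the second to $0$ as $M\to\infty$ since $|2^{M}\xi|\to\infty$ lies outside $\mathrm{supp}(\chi)$.

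Finally I would verify the support-disjointness properties. The dilated support satisfies $\mathrm{supp}\,\varphi(2^{-j}\cdot)\subset\{\tfrac{3}{4}\,2^{j}\le|\xi|\le\tfrac{8}{3}\,2^{j}\}$. If $|j-j'|\ge 2$, say $j'\ge j+2$, then $\tfrac{3}{4}\,2^{j'}\ge 3\cdot 2^{j}>\tfrac{8}{3}\,2^{j}$, so the two annuli do not intersect. Similarly, for $j\ge 1$ the set $\mathrm{supp}\,\varphi(2^{-j}\cdot)$ is contained in $\{|\xi|\ge 3/2\}$, which is disjoint from $\mathrm{supp}\,\chi\subset B(0,4/3)$. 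I do not anticipate any genuine obstacle here; the construction is classical and each property drops out of the explicit formula $\varphi=\chi(\cdot/2)-\chi$. The only mildly delicate point is arranging $\chi_{0}$ so that $\chi(\xi/2)\ge\chi(\xi)$ pointwise, which is why one starts from a radial monotone profile rather than an arbitrary bump.
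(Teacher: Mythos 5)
Your construction is correct and complete: the telescoping definition $\varphi=\chi(\cdot/2)-\chi$ built from a radial, monotone bump profile yields all four claimed properties exactly as you argue, and you correctly flag the one delicate point (monotonicity of $\chi_{0}$, needed for $\varphi\ge 0$). The paper offers no proof of this proposition—it simply cites \cite[Proposition 2.10]{BCD}—and your argument is precisely the classical construction given there, so nothing further is needed.
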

Let us now define the dyadic blocks $\Delta_{j} $, which are the basic bricks of Littlewood-Paley's analysis. 
\begin{definition}\label{def.blocks}
Let $\chi$ and $\varphi$ be the two functions constructed in Proposition \ref{prop.little} and write 
$h= \cf^{-1}\varphi$ and $\tilde{h} = \cf^{-1} \chi$. The \emph{nonhomogeneous dyadic blocks $\Delta_{j}$} are defined by
\begin{eqnarray*}
\Delta_{j} u = 0 \quad \text{if}\quad j\leq -2, \qquad \Delta_{-1} u   = \int_{\R^{d}} \tilde{h}(y) u(x-y) dy, 
\end{eqnarray*}
and for $j\geq 0$, we have
\begin{eqnarray}\label{eqn.blocks}
    \Delta_{j} u   = 2^{jd} \int_{\R^{d}} h(2^{j} y) u(x-y)dy . 
 \end{eqnarray}
 
\end{definition}

With the previous notations in hand, we shall now define a family of Besov type spaces in which our noise $\partial W$ will sit.  
\begin{definition}
Let $\kappa\in \R$ and $1\leq q< \infty$. 
We will consider a spatial weight  $\rho_{\si} (x)= \frac{1}{(1+|x|)^{\si}} $ defined for $x\in \R^{d} $ and $\si>d$.  The non-homogeneous weighted Besov space $B^{\kappa}_{q}$ consists of all tempered distributions $u$ such that
  \begin{eqnarray}\label{eqn.besovnorm}
\| u \|_{B^{\kappa}_{ q}}^{2q} : = 
 \sum_{j\in \Z}   2^{2qj\kappa} \|\Delta_{j} u \|_{L^{2q}_{\rho_{\si}}}^{2q}  <\infty   .
\end{eqnarray}
Here $L^{2q}_{\rho} $ denotes the space  $L^{2q}( \R^{d}, \rho(x)dx )$. 
\end{definition}

\begin{remark}\label{remark.holder}
We have used the norms defined by \eqref{eqn.besovnorm} for computational sake. However, we should mention that if $f \in B^{\kappa}_{q}$ for $\kappa>\frac{d}{2q}$ and $\phi \in \cd (\R^{d})$, then $f\phi \in C^{\al}$ for $\al=\kappa-\frac{d}{2q}$. We are thus able to embed locally our Besov spaces $B^{\kappa}_{q}$ into H\"older type spaces. 
\end{remark}
\begin{proof}[Proof of Remark \ref{remark.holder}]
If $f \in B^{\kappa}_{q}$, it is shown in \cite[Proposition 3.27]{MW} that $f\phi$ belongs to the non weighted Besov space $B^{\kappa}_{2q, 2q}$. Then owing to \cite[Proposition 2.71]{BCD} we have that $B^{\kappa}_{2q, 2q}$ is continuously embedded into the H\"older space $C^{\al}$ with $\al =\kappa-\frac{d}{2q}$.  
\end{proof}

For our convenience we are working in this section with a noise $\partial W$, which has to be thought of as an integrated version in time of the noise $\dot{W}$ which appears in equation \eqref{eqn.anderson}. We now define this Gaussian family more rigorously. 
 
 \begin{definition}
 Let $W$ be the Gaussian family introduced in Definition \ref{def.gau}. We define another centered Gaussian family $\partial W = \{ \partial W_{st} (\varphi): 0\leq s\leq t\leq \tau, \varphi \in \cs (\R^{d}) \}$ by 
 \begin{eqnarray*}
\partial W_{st} (\varphi) = W(\mathbf{1}_{[s, t]}\otimes \varphi). 
\end{eqnarray*}
 \end{definition}
 
 We now state the assumptions on the covariance of our noise in a slightly different way with respect to \eqref{hyp.R} and \eqref{hyp.mu}. 
 
 \begin{hypothesis}\label{hyp.Rmu}
 We assume that there exists   $\beta  >0$ and $\beta'\in (0, 2]$ such that  the function $R$ and the measure $\mu$ appearing in \eqref{eqn.cov.w} satisfy the following conditions:
 \begin{eqnarray*}
|R(t, t) -R(u, t)-R(t, v)+R(u, v)| \leq K |t-(u\wedge v)|^{\beta'} \quad \text{and} \quad \int_{\R^{d}} \frac{1}{(1+|\xi|)^{2\beta}} \mu (d\xi) <\infty.
\end{eqnarray*}

 \end{hypothesis}
 
 In the following we prove that $\partial W$ can also be seen as a H\"older continuous function of time taking values in a weighted Besov space. 
\begin{lemma}\label{lemma.dW}
Let $W$ be the  Gaussian field in Definition \ref{def.gau} with time covariance $R$ and spatial spectral measure $\mu$, and suppose that  Hypothesis \ref{hyp.Rmu} holds true. Then $\partial W_{st} \in B^{\kappa}_{ q}$ for all $\kappa<-\beta$, $q\geq 1$, and $0\leq s \leq t$. 
Moreover, for all $\epsilon>0$, $q\geq 1$ and $\kappa<-\beta$ there exists a random variable $Z$ admitting moments of all orders such that for all $0\leq s<t\leq \tau$ we have
\begin{eqnarray}\label{eqn.pdW}
\|\partial W_{st}\|_{B^{\kappa}_{q}} \leq Z(t-s)^{\frac{\beta'}{2}-\epsilon}. 
\end{eqnarray}
\end{lemma}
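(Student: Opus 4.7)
The plan is to control $\partial W_{st}$ blockwise in the Littlewood--Paley decomposition, using that each dyadic block $\Delta_j\partial W_{st}(x)$ is a centered Gaussian random variable whose variance can be computed explicitly via \eqref{eqn.cov.w}. First, I would rewrite $\Delta_j\partial W_{st}(x)$ using \eqref{eqn.blocks} as $\partial W_{st}(\psi_{j,x})$, where $\psi_{j,x}(z)=2^{jd}h(2^{j}(x-z))$ for $j\ge 0$ (and the analogous kernel built from $\tilde h$ for $j=-1$). Direct computation with the Fourier convention of the paper gives $|\mathcal{F}\psi_{j,x}(\xi)|^2=|\varphi(2^{-j}\xi)|^2$, which is independent of $x$. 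Combining this with the time part in \eqref{eqn.cov.w} yields
\begin{equation*}
\be\bigl[|\Delta_j\partial W_{st}(x)|^2\bigr]
  = \bigl(R(t,t)-2R(s,t)+R(s,s)\bigr)\int_{\R^d}|\varphi(2^{-j}\xi)|^2\mu(\dd\xi).
\end{equation*}

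Next, Hypothesis \ref{hyp.Rmu} with $u=v=s$ bounds the time factor by $K(t-s)^{\beta'}$, while on the spatial side I would use the fact that $\varphi(2^{-j}\cdot)$ is supported in the annulus where $|\xi|\asymp 2^{j}$. On this support $(1+|\xi|)^{2\beta}\asymp 2^{2j\beta}$, hence
\begin{equation*}
\int_{\R^d}|\varphi(2^{-j}\xi)|^2\mu(\dd\xi)\le K\,2^{2j\beta}\int_{\R^d}\frac{\mu(\dd\xi)}{(1+|\xi|)^{2\beta}}<\infty,
\end{equation*}
so $\be[|\Delta_j\partial W_{st}(x)|^2]\le K\,2^{2j\beta}(t-s)^{\beta'}$ uniformly in $x$ (the block $j=-1$ is trivially controlled by the same estimate since $\chi$ has compact support). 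Gaussian hypercontractivity upgrades this to $\be[|\Delta_j\partial W_{st}(x)|^{2q}]\le C_q\,2^{2qj\beta}(t-s)^{q\beta'}$ for all $q\ge 1$, and integrating in $x$ against the weight $\rho_\sigma$ (which is integrable since $\sigma>d$) gives, by Fubini,
\begin{equation*}
\be\Bigl[\|\Delta_j\partial W_{st}\|_{L^{2q}_{\rho_\sigma}}^{2q}\Bigr]\le C_q\,2^{2qj\beta}(t-s)^{q\beta'}.
\end{equation*}

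Summing in $j$ using \eqref{eqn.besovnorm} then yields
\begin{equation*}
\be\bigl[\|\partial W_{st}\|_{B^{\kappa}_{q}}^{2q}\bigr]\le C_q(t-s)^{q\beta'}\sum_{j\ge -1}2^{2qj(\kappa+\beta)},
\end{equation*}
and the geometric series converges precisely because $\kappa<-\beta$. This proves the first assertion and also furnishes, for every $p\ge 1$, the moment bound $\be[\|\partial W_{st}\|_{B^{\kappa}_{q}}^{p}]\le C_p (t-s)^{p\beta'/2}$.

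Finally, for the pathwise Hölder estimate \eqref{eqn.pdW}, I would exploit the additivity $\partial W_{sr}+\partial W_{rt}=\partial W_{st}$ inherited from the linearity of $W$, so that $\partial W_{st}$ is the increment of the $B^{\kappa}_q$-valued process $t\mapsto\partial W_{0t}$. Kolmogorov's continuity criterion in the Banach space $B^{\kappa}_q$ then delivers a modification which is $(\beta'/2-\epsilon)$-Hölder continuous, with Hölder seminorm $Z$ possessing moments of all orders, which is exactly \eqref{eqn.pdW}. The main obstacle I anticipate is the bookkeeping in the variance step: one must verify that the Fourier transform of the translated block kernel coincides with $\varphi(2^{-j}\xi)$ up to a unimodular factor (so $|\mathcal{F}\psi_{j,x}(\xi)|^2$ is indeed $x$-independent), and then justify the annular localization uniformly in $j$, including the low-frequency block $j=-1$.
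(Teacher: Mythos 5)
Your argument is correct and follows essentially the same route as the paper's proof: blockwise variance computation via \eqref{eqn.cov.w} and the identity $\cf[2^{jd}h(2^{j}\cdot)](\xi)=\varphi(2^{-j}\xi)$, the annular bound $(1+|\xi|)^{2\beta}|\varphi(2^{-j}\xi)|^{2}\leq K2^{2j\beta}$, Gaussian hypercontractivity, integration against the weight $\rho_{\si}$, and summation of the geometric series using $\kappa<-\beta$. The only difference is that you close with Kolmogorov's continuity criterion where the paper invokes Garsia's lemma; these are interchangeable here, and in both cases the passage from the $2q$-th moment bound to moments of all orders rests on the standard equivalence of moments for norms of Gaussian vectors, which you (like the paper) leave implicit.
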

\begin{proof}
 By Definition \ref{def.blocks} of $\Delta_{j}$ we first write  
\begin{eqnarray*}
\Delta_{j} \partial W_{st}    = W(\mathbf{1}_{[s,t]} \otimes 2^{jd} h(2^{j}(x-\cdot))) .  
\end{eqnarray*}
  Since $  \Delta_{j} \partial W_{st} $ is Gaussian, by the hypercontractivity property we obtain
\begin{eqnarray}\label{eqn.bdLq}
\be \lc \left\|  \Delta_{j} \partial W_{st}  \right\|_{L_{\rho_{\si}}^{2q}}^{2q} \rc &=& \be \lc \int  |  \Delta_{j} \partial W_{st}(x)  |^{2q} \rho_{\si} (x)dx \rc
\nonumber
\\
&\leq& c_{q}   \lc \int  \be [|  \Delta_{j} \partial W_{st}(x)  |^{2}]^{ q} \rho_{\si} (x)dx \rc.
\end{eqnarray}
Notice that the function $2^{jd} h(2^{j}y) $ appearing in formula \eqref{eqn.blocks} satisfies
\begin{eqnarray*}
\cf[ 2^{jd} h(2^{j}\cdot) ](\xi) = \varphi (2^{-j} \xi). 
\end{eqnarray*}
Therefore, 
resorting to \eqref{eqn.cov.w} we can write  
\begin{eqnarray*}
\be [|  \Delta_{j} \partial W_{st}(x)  |^{2}] = R 
\lp\begin{matrix}
s&t\\ s&t
\end{matrix}\rp \int_{\R^{d}}   (1+|\xi|)^{2\beta}|\varphi (2^{-j}\xi)|^{2} \frac{\mu(d\xi)}{ (1+|\xi|)^{2\beta}}. 
\end{eqnarray*}
Hence,  taking into account the fact that  $ (1+ 2^{-j} |\xi|)^{2\beta}|\varphi (2^{-j}\xi)|^{2}  $  is uniformly bounded in $\xi$ and Hypothesis \ref{hyp.Rmu} on $\mu$,  we obtain
\begin{eqnarray}\label{eqn.bdL2}
\be [|  \Delta_{j} \partial W_{st}(x)  |^{2}]  &\leq & K2^{ 2j\beta} (R(t, t) - 2R(s,t)+R(s,s)) .
\end{eqnarray}
Now invoking \eqref{eqn.besovnorm}, and then substituting \eqref{eqn.bdL2} into \eqref{eqn.bdLq} and taking into account  the fact that  $\kappa <-\beta$, we  have 
\begin{eqnarray}\label{eqn.BesovW}
\be[\|\partial W_{st}\|^{2q}_{B^{\kappa}_{ q}}] 
= 
 \be\lc\sum_{j\in \Z}   2^{2qj\kappa} \|\Delta_{j} \partial W_{st} \|_{L^{2q}_{\rho_{\si}}}^{2q} \rc
\leq K (R(t, t) - 2R(s,t)+R(s,s)) ^{q}. 
\end{eqnarray} 
Once \eqref{eqn.BesovW} is proved, our assertion \eqref{eqn.pdW} is shown thanks to Hypothesis \ref{hyp.Rmu} on $R$ and a standard application of Garsia's   lemma. 
 \end{proof}
 
 In order to transfer our Lemma \ref{lemma.dW} on $\partial W$ to properties of the heat equation, we recall some results  (see  \cite{MW}) about the smoothing effects of the heat flow $p_{t}: =e^{t\Delta }$ in Besov spaces.  
\begin{lemma}\label{lemma.smoothing1}
Consider $\al \in \R$ and two real numbers $\eta, \kappa$ such that $\eta\geq \al$, $\kappa\geq \al$ and $\kappa-\al\leq 2$. 
Let    $q\geq 1  $ be a real number. Then there  exists a constant $K<\infty$ such that uniformly over $t>0$ we have
\begin{eqnarray*}
\| p_{t}  f \|_{B^{\eta}_{ q}} &\leq& \frac{K}{ t^{\frac{\eta-\al}{2}}} \|f\|_{B^{\al}_{ q}}   
\quad\quad\text{and}\quad\quad
\| (\id - p_{t}) f \|_{B^{\al}_{ q}} \leq K t^{\frac{\kappa-\al}{2}} \|f\|_{B^{\kappa}_{ q}}.  
\end{eqnarray*}

\end{lemma}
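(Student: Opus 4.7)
The plan is to reduce both estimates to block-level bounds via the Littlewood--Paley decomposition, exploiting the fact that the heat semigroup acts as a Fourier multiplier $e^{-t|\xi|^{2}/2}$ which is sharply localized on the annulus supporting $\Delta_{j}$. Since $p_{t}$ commutes with $\Delta_{j}$, the operator $p_{t}\Delta_{j}$ is given by convolution with the Schwartz kernel $K_{t,j} = \cf^{-1}[e^{-t|\xi|^{2}/2}\varphi(2^{-j}\xi)]$, which is the workhorse of the argument.

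\textbf{Step 1 (block smoothing bound).} I would first establish that for $j\geq 0$,
\begin{equation*}
\|p_{t} \Delta_{j} f\|_{L^{2q}_{\rho_{\si}}} \leq K\, e^{-c\, t 2^{2j}}\, \|\Delta_{j} f\|_{L^{2q}_{\rho_{\si}}},
\end{equation*}
by the rescaling $y = 2^{-j} z$, which shows that $K_{t,j}$ is a Schwartz function whose weighted $L^{1}$ norm decays like $e^{-ct 2^{2j}}$. The passage to the weighted $L^{2q}_{\rho_{\si}}$ norm is handled by the quasi-invariance bound $\rho_{\si}(x-y) \leq K(1+|y|)^{\si} \rho_{\si}(x)$ combined with the rapid decay of $K_{t,j}$; this is the type of weighted Young inequality worked out in~\cite{MW}. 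For $j=-1$ the heat semigroup is uniformly bounded on $L^{2q}_{\rho_{\si}}$.

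\textbf{Step 2 (first inequality).} Plugging the Step 1 bound into the definition \eqref{eqn.besovnorm} of the Besov norm,
\begin{equation*}
\|p_{t} f\|_{B^{\eta}_{q}}^{2q} \leq K \sum_{j\geq -1} 2^{2qj\al} \bigl(2^{j(\eta-\al)} e^{-c\, t 2^{2j}}\bigr)^{2q}\|\Delta_{j} f\|_{L^{2q}_{\rho_{\si}}}^{2q}.
\end{equation*}
The substitution $y = t 2^{2j}$ gives $2^{j(\eta-\al)} e^{-c t 2^{2j}} = t^{-(\eta-\al)/2} y^{(\eta-\al)/2} e^{-cy}$, and since $\eta-\al \geq 0$ the supremum $\sup_{y>0}y^{(\eta-\al)/2} e^{-cy}$ is finite. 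Consequently $\sup_{j\geq -1} 2^{j(\eta-\al)} e^{-c t 2^{2j}} \leq K t^{-(\eta-\al)/2}$, and extracting this supremum while recognizing the $B^{\al}_{q}$ norm on the right yields the first inequality.

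\textbf{Step 3 (second inequality).} For the operator $\id - p_{t}$ the multiplier is $1-e^{-t|\xi|^{2}/2}$, which obeys $|1-e^{-t|\xi|^{2}/2}| \leq \min(1, t|\xi|^{2}/2)$ and is therefore bounded by $\min(1, K t 2^{2j})$ on the annulus supporting $\Delta_{j}$. The same weighted convolution argument as in Step 1 then gives
\begin{equation*}
\|(\id-p_{t})\Delta_{j} f\|_{L^{2q}_{\rho_{\si}}} \leq K \min(1, t 2^{2j})\, \|\Delta_{j} f\|_{L^{2q}_{\rho_{\si}}}.
\end{equation*}
The hypothesis $0\leq \kappa-\al \leq 2$ means that $\theta := (\kappa-\al)/2 \in [0,1]$, so the elementary interpolation $\min(1, x) \leq x^{\theta}$ gives $\min(1, t 2^{2j}) \leq t^{\theta} 2^{j(\kappa-\al)}$. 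Inserting this and rewriting $2^{2qj\al}\cdot 2^{2qj(\kappa-\al)} = 2^{2qj\kappa}$ produces $\|(\id-p_{t})f\|_{B^{\al}_{q}}^{2q} \leq K t^{q(\kappa-\al)}\|f\|_{B^{\kappa}_{q}}^{2q}$, which is the desired bound after taking a $2q$-th root.

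\textbf{Main obstacle.} The only non-trivial ingredient is the weighted $L^{2q}_{\rho_{\si}}$ convolution estimate for $K_{t,j}$: because the polynomial weight is not translation invariant, a direct Young inequality does not apply, and one must exploit the Schwartz decay of $K_{t,j}$ to absorb the weight ratio $\rho_{\si}(x-y)/\rho_{\si}(x)$. This technical lemma is developed in~\cite{MW}, and its use is the key external input; every other step is a routine dyadic manipulation.
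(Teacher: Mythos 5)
The paper offers no proof of this lemma --- it is quoted from \cite{MW} (see also \cite[Lemma 2.4]{BCD} for the unweighted analogue) --- so there is no internal argument to compare yours against. Your proof is the standard Littlewood--Paley argument and is essentially correct: the reduction to block-level multiplier bounds, the rescaling that produces the gain $e^{-ct2^{2j}}$ for $j\ge 0$, the extraction of $\sup_{y>0}y^{(\eta-\al)/2}e^{-cy}<\infty$, and the interpolation $\min(1,x)\le x^{\theta}$ with $\theta=\tfrac{\kappa-\al}{2}\in[0,1]$ are exactly the right steps; you also correctly isolate the weighted Young inequality (Peetre's bound $\rho_{\si}(x)\le (1+|y|)^{\si}\rho_{\si}(x-y)$ combined with the Schwartz decay of the block kernel) as the one genuinely nontrivial input from \cite{MW}. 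Two points should be tightened. First, $p_{t}\Delta_{j}f$ is not literally convolution of $f$ with your $K_{t,j}$; to bound it by $\|\Delta_{j}f\|_{L^{2q}_{\rho_{\si}}}$ you should convolve $\Delta_{j}f$ with $\cf^{-1}[e^{-t|\xi|^{2}/2}\tilde{\varphi}(2^{-j}\xi)]$ for a fattened cutoff $\tilde{\varphi}$ equal to $1$ on the support of $\varphi$; likewise in Step 3 the pointwise bound $|1-e^{-t|\xi|^{2}/2}|\le \min(1,t|\xi|^{2}/2)$ does not by itself control the $L^{1}$ norm of the kernel --- write $1-e^{-t|\xi|^{2}/2}=\tfrac{t}{2}\int_{0}^{1}|\xi|^{2}e^{-st|\xi|^{2}/2}\,\dd s$ and bound each resulting kernel in weighted $L^{1}$ by $K2^{2j}$. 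Second, and more substantively, the block $j=-1$ carries no exponential gain (the multiplier is of order $1$ near $\xi=0$), so your uniform-boundedness fallback only gives $2^{-(\eta-\al)}\le K t^{-(\eta-\al)/2}$ for $t$ in a bounded interval; indeed the first inequality cannot hold uniformly over all $t>0$ in the non-homogeneous scale when $\eta>\al$ (test it on a purely low-frequency $f$). The estimate should therefore be read as uniform over $t\in(0,T]$ with $K=K(T)$, which is all the paper uses since $t\le\tau$, but this restriction deserves to be stated.
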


Recall that in Section \ref{section.noise} (see Definition \ref{def.int}) the Wiener integrals were considered as $L^{2}(\Omega)$ limit of Riemann sums. In this section we introduce the same kind of regularization, whereas the limits are considered in an almost sure sense. 

\begin{definition}\label{def.int2}
Consider the dyadic partition of $\R_{+}$ defined by   $t^{n}_{k}=2^{-n}k$, for $k\in \N$  and $ n\in \N $. The solution of  equation \eqref{eqn.anderson} is defined as the almost sure limit of
\begin{eqnarray}\label{eqn.int2}
u^{n}_{t}=\sum_{0\leq t^{n}_{k}< t} p_{t-t^{n}_{k}}   \partial W_{t^{n}_{k}t^{n}_{k+1}} ,
\end{eqnarray}
whenever   $u^{n}_{t}$ converges 
in some Besov space $B^{\eta}_{ q}$ with $\eta>0$ and $q\geq 1$.  In relation \eqref{eqn.int2}, $p_{s} \partial W_{uv}$ has to be understood as the action of the semigroup $p_{s}$ on the distribution $\partial W_{uv} \in B^{\kappa}_{q}$, as introduced in Lemma \ref{lemma.smoothing1}. 
\end{definition}

We are now ready to solve equation \eqref{eqn.anderson} within our Besov space framework. As mentioned above, this method brings out regularity results on the solution in a natural way. 
\begin{theorem}\label{thm2}
Let $W$ be the centered Gaussian field introduced in   Definition \ref{def.gau} with   spatial spectral measure $\mu$
and time covariance $R$. We suppose  that  Hypothesis \ref{hyp.Rmu} holds true for some constants $\beta>0$ and $\beta'\in (0, 2]$ such that $\beta'>\beta$. We consider an arbitrarily large time horizon $\tau>0$. Then the following holds true for our stochastic heat equation.

\noindent
\emph{(i)}   
Equation \eqref{eqn.anderson} admits a random field solution $\{u(t, x); \, t\in [0,\tau] , x\in \R^{d}\}$ in the sense of Definition \ref{def.int2}.

\noindent
\emph{(ii)}   
In addition, for any $\epsilon,\eta>0$ such that $\eta+\epsilon <\beta'-\beta$ and for all $q\geq 1$, the solution $u$ to \eqref{eqn.anderson} almost surely sits in the space  $C^{\frac{\beta'-\beta-\eta}{2}-\epsilon}([0 , \tau]; B^{\eta}_{ q})$. 
\end{theorem}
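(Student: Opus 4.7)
The plan is to bootstrap the Besov regularity of $\partial W$ provided by Lemma \ref{lemma.dW} through the smoothing action of the heat semigroup described in Lemma \ref{lemma.smoothing1}, by analyzing the dyadic increments of the Riemann-sum approximation $u^{n}_{t}$ in a Kolmogorov-type fashion.

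For (i), I would show that $(u^{n}_{t})_{n\geq 1}$ is almost surely Cauchy in $B^{\eta}_{q}$ whenever $\eta+\epsilon<\beta'-\beta$. Since the partition at level $n+1$ refines the one at level $n$ by bisecting every interval, the identity $\partial W_{t^{n}_{k}, t^{n}_{k+1}} = \partial W_{t^{n}_{k}, t^{n+1}_{2k+1}} + \partial W_{t^{n+1}_{2k+1}, t^{n}_{k+1}}$, together with the semigroup identity $p_{t-t^{n+1}_{2k+1}} - p_{t - t^{n}_{k}} = p_{a_{k}}(\id - p_{h})$ for $a_{k} = t - t^{n+1}_{2k+1}$ and $h = 2^{-(n+1)}$, yields
\begin{equation*}
u^{n+1}_{t} - u^{n}_{t} = \sum_{k} p_{a_{k}} (\id - p_{h})\, \partial W_{t^{n+1}_{2k+1}, t^{n}_{k+1}}.
\end{equation*}
Applying Lemma \ref{lemma.smoothing1} twice to each summand $\phi = \partial W_{t^{n+1}_{2k+1}, t^{n}_{k+1}}$ gives
\begin{equation*}
\|p_{a_{k}}(\id - p_{h})\phi\|_{B^{\eta}_{q}} \leq K\, a_{k}^{-(\eta-\alpha)/2}\, h^{(\kappa-\alpha)/2}\, \|\phi\|_{B^{\kappa}_{q}}
\end{equation*}
for any $\alpha\leq\eta$, $\alpha\leq\kappa$ with $\kappa-\alpha\leq 2$. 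Inserting Lemma \ref{lemma.dW} with $\kappa<-\beta$ to control $\|\phi\|_{B^{\kappa}_{q}}\leq Z h^{\beta'/2-\epsilon}$, and using $\sum_{k} a_{k}^{-(\eta-\alpha)/2}\lesssim 2^{n}$ (a Riemann-sum comparison valid as soon as $\alpha>\eta-2$), one arrives at
\begin{equation*}
\|u^{n+1}_{t} - u^{n}_{t}\|_{B^{\eta}_{q}} \leq K Z \cdot 2^{\,n[1 - (\kappa - \alpha + \beta')/2 + \epsilon]}.
\end{equation*}
Geometric summability requires $\kappa - \alpha + \beta' > 2 + 2\epsilon$, which, after choosing $\kappa$ just below $-\beta$ and $\alpha$ just above $\eta - 2$, reduces precisely to the sharp condition $\eta + 2\epsilon < \beta' - \beta$ assumed in the theorem.

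For (ii), I would prove the H\"older estimate $\|u_{t} - u_{s}\|_{B^{\eta}_{q}} \leq K Z (t-s)^{(\beta' - \beta - \eta)/2 - \epsilon}$ for $0\leq s<t\leq\tau$ via the splitting
\begin{equation*}
u_{t} - u_{s} = (u_{t} - p_{t-s}\, u_{s}) + (p_{t-s} - \id)\, u_{s}.
\end{equation*}
The second piece is handled directly by Lemma \ref{lemma.smoothing1}: choosing the intermediate regularity $\kappa' = \beta' - \beta - 2\epsilon$ yields $\|(p_{t-s}-\id) u_{s}\|_{B^{\eta}_{q}} \leq K(t-s)^{(\kappa'-\eta)/2}\, \|u_{s}\|_{B^{\kappa'}_{q}}$, and the norm $\|u_{s}\|_{B^{\kappa'}_{q}}$ is uniformly controlled in $s \in [0,\tau]$ by part (i) applied at regularity $\kappa'$ (which itself satisfies $\kappa' + \epsilon < \beta' - \beta$). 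The first piece is the stochastic convolution of $\partial W$ on the interval $[s,t]$, to which the dyadic argument from part (i) applies verbatim after restriction to $[s,t]$, producing the same power of $(t-s)$. Standard Kolmogorov-type continuity (or a direct Garsia--Rodemich--Rumsey application, exploiting the Gaussian tails hidden in the random constant $Z$) then upgrades this estimate into almost sure H\"older continuity with the announced exponent.

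The main technical difficulty is the simultaneous bookkeeping of the four constraints $\kappa - \alpha \leq 2$ (smoothing), $\kappa < -\beta$ (Lemma \ref{lemma.dW}), $\alpha > \eta - 2$ (Riemann comparison), and $\kappa - \alpha + \beta' > 2 + 2\epsilon$ (summability of the dyadic series). A direct inspection shows that letting $\kappa \to -\beta^{-}$ and $\alpha \to (\eta-2)^{+}$ satisfies all four conditions exactly within the announced range $\eta + \epsilon < \beta' - \beta$, which explains why this regime is the natural one. A parallel bookkeeping is needed in (ii) to accommodate the auxiliary regularity $\kappa'$.
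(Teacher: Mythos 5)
Your proposal is correct in substance and, for part (i), is essentially the paper's argument: the telescoping identity $u^{n+1}_{t}-u^{n}_{t}=\sum_{k}p_{t-t^{n+1}_{2k+1}}(\id-p_{2^{-(n+1)}})\partial W_{t^{n+1}_{2k+1}t^{n+1}_{2k+2}}$ is exactly the term $\cu_{1}$ in the paper's decomposition \eqref{eqn.un} (with $s=0$), and your four-constraint bookkeeping ($\kappa<-\beta$, $\kappa-\al\le 2$, $\al>\eta-2$, $\kappa-\al+\beta'>2+2\epsilon$) reproduces the parameter choice \eqref{eqn.para}, including the conclusion that the admissible range is $\eta<\beta'-\beta$ up to epsilons.

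Where you genuinely diverge is part (ii). The paper never isolates $u_{t}-p_{t-s}u_{s}$: it estimates the full double increment $(u^{n}_{t}-u^{n}_{s})-(u^{n+1}_{t}-u^{n+1}_{s})$, which splits into the ``new noise on $[s,t]$'' piece $\cu_{1}$ and a second piece $\cu_{2}=\sum_{t^{n+1}_{2k}<s}(p_{t-s}-\id)p_{s-t^{n+1}_{2k+1}}(p_{2^{-(n+1)}}-\id)\partial W$, so that convergence and time-H\"older continuity come out of a single telescoping sum. Your mild-solution splitting $u_{t}-u_{s}=(u_{t}-p_{t-s}u_{s})+(p_{t-s}-\id)u_{s}$ achieves the same thing (the semigroup property gives $u^{n}_{t}-p_{t-s}u^{n}_{s}=\sum_{s\le t^{n}_{k}<t}p_{t-t^{n}_{k}}\partial W_{t^{n}_{k}t^{n}_{k+1}}$, so your first piece is indeed handled by the part (i) machinery restricted to $[s,t]$, yielding the extra factor $(t-s)^{1+(\al-\eta)/2}$), and it is arguably more transparent; the price is that you must establish a bound on $\sup_{s\le\tau}\|u_{s}\|_{B^{\kappa'}_{q}}$ at the near-top regularity $\kappa'=\beta'-\beta-2\epsilon$, a step the paper does not need. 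That bound is true, but deducing it ``from part (i) applied at regularity $\kappa'$'' requires a word of care about the base level of the telescoping (the crude term $u^{n_{0}}_{s}=\sum p_{s-t^{n_{0}}_{k}}\partial W$ contains one factor $(s-t^{n_{0}}_{k})^{-(\kappa'-\al)/2}$ that is not uniformly bounded as $s$ approaches a grid point from above, so one should either start the telescoping at a level depending on $s$ or organize the dyadic decomposition relative to $s$); the paper's own estimate \eqref{eqn.u0} quietly faces the same issue, so this is a shared, repairable technicality rather than a gap specific to your route. Two small further remarks: both you and the paper ignore the ``orphan'' boundary summand created when $s$ is not a level-$(n+1)$ grid point, which needs the usual split between scales $2^{-n}\gtrless t-s$; and your final appeal to Kolmogorov/Garsia is unnecessary, since the random constant $Z$ of Lemma \ref{lemma.dW} is a single random variable valid for all $s<t$, so the pathwise bound $\|u_{t}-u_{s}\|_{B^{\eta}_{q}}\le KZ(t-s)^{(\beta'-\beta-\eta)/2-\epsilon}$ already is the asserted H\"older continuity.
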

\begin{proof}
Let $0\leq s<t\leq \tau$ and recall that the dyadic partition of $\R_{+}$ is defined by $t^{n}_{k}=k2^{-n}$ for $k,n\ge 0$. 
With some elementary computations we easily get 
\begin{eqnarray}\label{eqn.un}
&&
 (u^{n}_{t}  -u^{n}_{s})-(u^{n+1}_{t} -u^{n+1}_{s}  )
 \nonumber
\\
& &=
 \sum_{s\leq t^{n+1}_{2k}< t} (p_{t-t^{n+1}_{2k}} - p_{t-t^{n+1}_{2k+1}} )  \partial W_{t^{n+1}_{2k+1}t^{n+1}_{2k+2}}
 \nonumber
 \\
 &&\quad+ 
 \sum_{0\leq t^{n+1}_{2k}< s} [(p_{t-t^{n+1}_{2k}}  -p_{s-t^{n+1}_{2k}} ) - (p_{t-t^{n+1}_{2k+1}}  -p_{s-t^{n+1}_{2k+1}} ) ] \partial W_{t^{n+1}_{2k+1}t^{n+1}_{2k+2 }}
 \nonumber
\\
&& = \cu_{1}+\cu_{2} ,
\end{eqnarray}
where we have set
\begin{eqnarray*}
&& \cu_{1}=
 \sum_{s\leq t^{n+1}_{2k}< t} p_{t-t^{n+1}_{2k+1 }}   
 \lc
  (p_{2^{-(1+n)}} - \id)  \partial W_{t^{n+1}_{2k+1}t^{n+1}_{2k+2}}
\rc
\nonumber
 \\
&&\cu_{2}= 
 \sum_{0\leq t^{n+1}_{2k}< s} (p_{t-s}-\id)p_{s-t^{n+1}_{2k+1}}  (p_{2^{-(1+n)}}-\id) \partial W_{t^{n+1}_{2k+1}t^{n+1}_{2k+2 }}.
\nonumber
\end{eqnarray*}
In the following, we  bound $\cu_{1}$ and $\cu_{2}$ separately, starting with $\cu_{1}$. 

In order to analyze the term $\cu_{1}$ in \eqref{eqn.un}, we first tune the parameters of our Besov space. Namely we consider three parameters $\kappa$, $\al$, $\eta$ satisfying the following conditions:
\begin{eqnarray}\label{eqn.para}
\kappa=-\beta-\epsilon, \quad \al< \kappa+\beta'-2-2\epsilon , \quad 0<\eta<2+\al
\end{eqnarray}
for an arbitrary small constant $\epsilon >0$. 
 Notice that for such values of the parameters we have 
 \begin{eqnarray*}
\frac{\kappa-\al}{2} + \frac{\beta'}{2}-\epsilon-1>0 \quad \text{and} \quad  \frac{\al - \eta}{2}>-1. 
\end{eqnarray*}
  Also observe that this choice of $\al$ and $\eta$ is possible as long as $\eta<\beta'-\beta-3\epsilon$. 
 With these values of $\kappa$, $\al$ and $\eta$,   we now apply  Lemma \ref{lemma.smoothing1}   and our estimate \eqref{eqn.pdW}  to the terms of $\cu_{1}$. This yields   the following estimate 
\begin{eqnarray}\label{eqn.ppW}
&&
\left\|
  p_{t-t^{n+1}_{2k+1 }}   
 \lc
  (p_{2^{-(1+n)}} - \id)  \partial W_{t^{n+1}_{2k+1}t^{n+1}_{2k+2}}
\rc
\right\|_{B^{\eta}_{ q}}
\nonumber
\\
&&\leq Z (t-t^{n+1}_{2k+1 })^{\frac{\al - \eta}{2}} \left\|
 (p_{2^{-(1+n)}} - \id)  \partial W_{t^{n+1}_{2k+1}t^{n+1}_{2k+2}}
\right\|_{B^{\al}_{ q}}
\nonumber
\\
&&\leq Z (t-t^{n+1}_{2k+1 })^{\frac{\al - \eta}{2}} 
 (2^{-n})^{\frac{\kappa-\al}{2} + \frac{\beta'}{2}-\epsilon}.
\end{eqnarray} 
Summing \eqref{eqn.ppW} over $k$   and taking into account \eqref{eqn.un} we obtain an upper-bound for $\cu_{1}$:
\begin{eqnarray}\label{eqn.u1}
\|\cu_{1}\|_{B^{\eta}_{q}} \leq 
\sum_{s\leq t^{n+1}_{2k}< t}\left\|
  p_{t-t^{n+1}_{2k+1 }}   
 \lc
  (p_{2^{-(1+n)}} - \id)  \partial W_{t^{n+1}_{2k+1}t^{n+1}_{2k+2}}
\rc
\right\|_{B^{\eta}_{ q}}
\nonumber
\\
\leq
Z (2^{-n})^{\frac{\kappa-\al}{2} + \frac{\beta'}{2}-1-\epsilon} (t-s)^{\frac{\al-\eta}{2}+1} .
\end{eqnarray}

The term $\cu_{2}$ is bounded along the same lines as $\cu_{1}$.  Namely, we take $\eta'$ such that $\al>\eta'-2>\eta-2$.  Applying Lemma \ref{lemma.smoothing1} to $\cu_{2}$ and then following the estimates of \eqref{eqn.ppW}, we are left with  
\begin{eqnarray}\label{eqn.u2}
\|\cu_{2}\|_{B^{\eta}_{q}} \leq  \sum_{0\leq t^{n+1}_{2k}<s}Z (t-s)^{\frac{\eta'-\eta }{2}} (s-t^{n+1}_{2k+1 })^{\frac{\al - \eta'}{2}} 
 (2^{-n})^{\frac{\kappa-\al}{2} + \frac{\beta'}{2}-\epsilon}
\nonumber \\
 \leq Z(t-s)^{\frac{\eta'-\eta }{2}} (2^{-n})^{\frac{\kappa-\al}{2} + \frac{\beta'}{2}-1-\epsilon} s^{\frac{\al - \eta'}{2}+1} .
\end{eqnarray}

We now plug inequalities   \eqref{eqn.u1} and \eqref{eqn.u2} into \eqref{eqn.un}, which yields  
\begin{eqnarray}\label{eqn.du}
\|(u^{n}_{t}  -u^{n}_{s})-(u^{n+1}_{t} -u^{n+1}_{s}  )\|_{B^{\eta}_{ q}}   &\leq & Z(t-s)^{\frac{\eta'-\eta }{2}} (2^{-n})^{\frac{\kappa-\al}{2} + \frac{\beta'}{2}-1-\epsilon}   .
\end{eqnarray}
Note that a similar estimate for  $u^{0}_{t} - u^{0}_{s}$ also holds:
\begin{eqnarray}\label{eqn.u0}
\| u^{0}_{t}  -u^{0}_{s} \|_{B^{\eta}_{ q}}   &\leq & Z(t-s)^{\frac{\eta' -\eta }{2} }    ,
\end{eqnarray}
which can be shown in a similar way as for \eqref{eqn.du}.

It now follows easily  from \eqref{eqn.du} and \eqref{eqn.u0} and the inequality $\frac{\kappa-\al}{2} + \frac{\beta'}{2}-\epsilon-1>0 $ that $u^{n}_{t} $ converges to $u_{t} $ in ${B^{\eta}_{ q}}$ and that
\begin{eqnarray}\label{eqn.uho}
\sup_{s, t\in [0, \tau]} \frac{ \| u_{t} -u_{s} \|_{B^{\eta}_{q}} }{|t-s|^{\frac{\eta'-\eta}{2}}} \leq Z.
\end{eqnarray}
Let us now compute the order of magnitude of $\eta'-\eta$. Recall that we had to impose 
\begin{eqnarray*}
\eta<\eta'<\al+2.
\end{eqnarray*}
In addition, according to \eqref{eqn.para} we have $\al<\beta'-\beta-2-3\epsilon$. Hence it is readily checked that $\eta'$ is at most $(\beta'-\beta-3\epsilon)-$. In conclusion, the exponent $\eta'-\eta$ in \eqref{eqn.uho} can take any value in $(0, \beta'-\beta)$, and we get   $u \in C^{\frac{\beta'-\beta-\eta}{2}-3\epsilon}([0 , \tau]; B^{\eta}_{ q})$. The proof is now complete. 
 \end{proof}

\begin{remark}
Combining Theorem \ref{thm2} , Remark \ref{remark.holder} and due to the fact that we can consider an arbitrarily large number $q$ in Theorem \ref{thm2}, we get that the random field solution $u$ to equation \eqref{eqn.anderson} is a $[\frac{1}{2}(\beta'-\beta-\eta)-\epsilon, \eta]$-H\"older function on $[0, \tau]\times [-M, M]^{d}$ for $\epsilon$ arbitrarily small, $M$ arbitrarily large and any $\eta\in (0, \beta'-\beta)$. 
\end{remark}

 
 
 


\end{document}